\theoremstyle{plain}
\newtheorem{blank}{}[section]
\newtheorem{prop}[blank]{Proposition}
\newtheorem{thm}[blank]{Theorem}
\newtheorem{cor}[blank]{Corollary}
\theoremstyle{definition}
\newtheorem{dblank}[blank]{}
\newtheorem*{defs}{Definitions}
\theoremstyle{remark}
\newtheorem*{rem}{Remark}
\newcommand{\ie}{\textit{i.e.}}
\newcommand{\eg}{\textit{e.g.}}
\newcommand{\cf}{\textit{cf.}}
\newcommand{\tx}{\textup}
\newcommand{\rest}{{\restriction}}
\newcommand{\set}[1]{\{\, #1\,\}}
\newcommand{\N}{\mathbb N}
\newcommand{\Q}{\mathbb Q}
\newcommand{\R}{\mathbb R}
\newcommand{\Z}{\mathbb Z}
\newcommand{\rr}{\mathfrak R}
\newcommand{\grint}[1]{\left\lfloor#1\right\rfloor}
\DeclareMathOperator{\Th}{Th}
\newcommand{\nbd}{\nobreakdash-\hspace{0pt}}
   \def\th@plain{\slshape}
\begin{document}
\title[Connectedness in structures on the real numbers]
{Connectedness in structures on the real numbers: o-minimality and undecidability}

\author[A.~Dolich]{Alfred Dolich}
\address{Department of Mathematics and Computer Science\\
Kingsborough Community College\\
2001 Oriental Boulevard\\
Brooklyn, New York 11235, USA}
\address{
Department of Mathematics\\
The Graduate Center\\
365 Fifth Avenue\\
Room 4208\\
New York, New York 10016, USA}
\email{alfredo.dolich@kbcc.cuny.edu}
 
\author[C.~Miller]{Chris Miller}
\address
{Department of Mathematics\\
Ohio State University\\
231 West~18th Avenue\\
Columbus, Ohio 43210, USA}
\email{miller@math.osu.edu}

\author[A.~Savatovsky]{Alex Savatovsky}
\address
{Department of Mathematics and Statistics\\
University of Konstanz\\
78457 Konstanz, Germany}
\email{alex@savatovsky.net}

\author[A.~Thamrongthanyalak]
{Athipat Thamrongthanyalak}
\address
{Department of Mathematics\\
Ohio State University\\
231 West~18th Avenue\\
Columbus, Ohio 43210, USA}
\curraddr
{Department of Mathematics and Computer Science\\
Faculty of Science\\
Chulalongkorn University\\
Bangkok\\
10330\\
Thailand}
\email{athipat.th@chula.ac.th}




\thanks{\today. Some version of this document has been submitted for publication. Comments are welcome. Miller is the corresponding author.}

\begin{abstract}
We initiate an investigation of structures on the set of real numbers having the property that path components of definable sets are definable.
All o\nobreakdash-\hspace{0pt}minimal structures on $(\mathbb{R},<)$ have the property, as do all 
expansions of $(\mathbb{R},+,\cdot,\mathbb{N})$. 
Our main analytic-geometric result is that any such expansion of $(\mathbb{R},<,+)$ by
boolean combinations of open sets (of any arities) either is o\nobreakdash-\hspace{0pt}minimal  or defines an isomorph of $(\mathbb N,+,\cdot\,)$.
We also show that any given expansion of $(\mathbb{R}, <, +,\mathbb{N})$ by subsets of $\mathbb{N}^n$ ($n$ allowed to vary)
has the property if and only if it defines all arithmetic sets.   
Variations arise by considering connected components or quasicomponents instead of path components.
 \end{abstract}

\maketitle

\section{Introduction}

We are interested in structures on the set of real numbers, 
especially expansions of the real field $(\R,+,\cdot\,)$ by locally path connected sets, 
having the property that each path component of each definable set (allowing parameters, and of any arity) is definable.
Variations on the theme arise by considering connected components and quasicomponents.
Our original motivation lies in real-analytic geometry,
but we have come to believe that a more foundational investigation is in order.
Thus, in this paper, we provide some fundamental logical results and some examples.
 
\subsection*{Some global conventions}
The reader is assumed to be familiar with basic topology (\eg, Munkres~\cite{munkres}) and mathematical logic (especially o-minimality, \eg, van den Dries~\cite{ominbook}).
Our default is to identify interdefinable structures; 
we shall make it clear when dependence on the language is relevant.
We always regard each $\R^n$ as equipped with its usual topology, and subsets of $\R^n$ with the induced topology.
We abbreviate ``connected component'' by ``component''.
The set of all nonnegative integers is denoted by $\N$.

\medskip

Proofs of results stated in this introduction are postponed to later in the paper.

\begin{defs}
A structure with underlying set $\R$ is:
\begin{enumerate}
\item
\textbf{path-component closed}, or \textbf{PCC} for short,
if for every definable set $E$, each path component of $E$ is definable;
\item
\textbf{component closed}, or \textbf{CC} for short,
if \tx{(1)} holds with ``component'' in place of ``path component'';
\item
\textbf{quasicomponent closed}, or \textbf{QCC} for short,
if \tx{(1)} holds with ``quasicomponent'' in place of ``path component''.
\end{enumerate}
Note that every structure on $\R$ that satisfies any of these conditions
defines the usual order $<$ on $\R$. (Consider $\set{(x,y)\in\R^2: x\neq y}$.) 
Thus, the definitions are really about expansions of $(\R,<)$. 
\end{defs}

We suspect that, in this generality, none of these conditions imply any of the others, but our candidates for counterexamples are ad hoc and unverified as yet.
Currently we are most interested in PCC structures, but simultaneous consideration of the other two conditions is natural.

Trivially, the expansion of $\R$ by each subset of $\R^n$ as $n$ ranges over the positive integers is PCC, CC and QCC.
Thus, abstractly, every structure on $\R$ has a least expansion that is PCC, CC and QCC; 
of course, this holds also for each of the conditions individually and pairwise.
It is easy (relative to currently available technology) to see that there exist two concrete classes of nontrivial examples.

\begin{prop}\label{extremes}
Every o\nbd minimal expansion of $(\R,<)$ is PCC\tx, CC and QCC.
Every expansion of $(\R,+,\cdot,\N)$ is PCC\tx, CC and QCC.
\end{prop}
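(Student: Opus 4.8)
The plan is to treat the two assertions separately, since they rest on entirely different technology. For the o\nbd minimal case I would invoke the standard structure theory: in an o\nbd minimal expansion of $(\R,<)$, cell decomposition shows that every definable $E\subseteq\R^n$ has finitely many definably connected components, each definable and relatively clopen in $E$, and, as the underlying order is the real line, definable connectedness coincides with topological connectedness. Two short consequences then settle all three conditions at once. First, since each component is relatively clopen and there are only finitely many, the quasicomponent of a point equals its component, giving QCC. Second, each cell is definably homeomorphic to an open box, hence definably path connected, and a definably connected definable set is a finite union of cells that can be chained together by definable paths; thus every connected definable set is path connected, so path components equal components, giving PCC. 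In each case the relevant components are the (definable) definably connected components, so such a structure is PCC, CC and QCC.

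For expansions of $(\R,+,\cdot,\N)$ the mechanism is coding. From $+,\cdot$ and the predicate $\N$ one defines $\Z$, $\Q$, a pairing on $\N$, and base\nbd $2$ expansions, and thereby a definable coding of an arbitrary countable sequence of reals by a single real with a definable decoding map; in short, such a structure interprets second\nbd order arithmetic. Consequently one may quantify, by a single real quantifier, over the elements of any explicitly coded Polish space: in particular over continuous maps $[0,1]\to\R^n$ (coded by their values on $\Q\cap[0,1]$ together with a modulus of uniform continuity) and over open subsets of $\R^n$ (coded by the subfamily of rational balls they contain). Crucially these codings are available in any expansion $\mathcal R\supseteq(\R,+,\cdot,\N)$, and for any $\mathcal R$\nbd definable $E$ the relation $z\in E$ is $\mathcal R$\nbd definable, so every set produced below is $\mathcal R$\nbd definable.

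Two of the three conditions are then immediate. For PCC, the path component of $x$ in $E$ is the set of $y$ for which there is a real coding a uniformly continuous $\gamma$ whose continuous extension $\bar\gamma\colon[0,1]\to\R^n$ satisfies $\bar\gamma(0)=x$, $\bar\gamma(1)=y$, and $\bar\gamma(t)\in E$ for all $t\in[0,1]$; since $\bar\gamma(t)$ is a definable function of the code and of $t$, this is an $\mathcal R$\nbd definable condition. For QCC, the quasicomponent of $x$ in $E$ is the set of $y\in E$ such that for every pair of reals coding open $U,V$ with $E\subseteq U\cup V$ and $U\cap V\cap E=\emptyset$ one has $x\in U\Leftrightarrow y\in U$: a universal real quantifier over a definable matrix, hence again $\mathcal R$\nbd definable.

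The component condition is the real obstacle, and the point at which I expect genuine descriptive\nbd set\nbd theoretic input to be needed. The naive description of the component of $x$ quantifies over all connected $C$ with $x\in C\subseteq E$, a third\nbd order quantifier not available to us; and this cannot be evaded cheaply, since the standard example $\set{(0,0),(0,1)}\cup\bigcup_{n\ge1}(\set{1/n}\times[0,1])$ shows that components can be strictly finer than quasicomponents, so the QCC formula above does not compute them. The plan is to replace the set quantifier by a single real quantifier ranging over \emph{coded} connected witnesses, say Borel subsets of $E$ specified by Borel codes, with connectedness read off the code by a clopen\nbd separation clause, so that $y$ lies in the component of $x$ exactly when some coded connected $C$ with $x,y\in C\subseteq E$ exists. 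The hard part is to show that coded witnesses suffice to recover the \emph{true} component; closed connected witnesses alone do not, as the open topologist's sine curve already exhibits a connected definable set whose sole component joins points lying on no closed\nbd in\nbd $\R^n$ connected subset. Making this precise amounts to showing that the connected\nbd component relation of a set is definable from that set by number and real quantifiers, uniformly enough to relativize to an arbitrary expansion, and I regard establishing this, rather than the bookkeeping around it, as the crux of the second half of the proposition.
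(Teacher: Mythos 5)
Your treatment of expansions of $(\R,+,\cdot,\N)$ has a genuine gap, and it is the one you flag yourself: CC is never established. The missing idea is that the definition of CC only asks that each individual component of a definable set $E$ be definable \emph{with parameters}; it does not ask for a uniform definition of the relation ``$y$ lies in the component of $x$''. Once this is noted, no descriptive-set-theoretic input about connectedness is needed at all. In $(\R,+,\cdot,\N)$ there is a single $\emptyset$\nbd definable family $Z\subseteq\R^m\times\R^n$ whose fibers run over \emph{all} closed subsets of $\R^n$ (code an open set by a real whose digits record which rational balls it contains); this is the fact the paper cites from van den Dries' book. Hence every subset of $E$ that is closed in $E$ is definable with parameters, being of the form $E\cap Z_a$. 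Components and quasicomponents of any space are closed in that space, so CC and QCC are immediate. Your Borel-code program, and the worry about the open topologist's sine curve, are aimed at a harder problem (uniform, witness-based definability of the component relation) that the proposition never poses. Your PCC and QCC formulas are correct as written, and your PCC coding (restriction to rationals plus a modulus) is a variant of the paper's, which instead codes a path by its compact graph, expressing continuity via closedness of the graph.

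In the o-minimal half there is a second gap, located exactly at the crux. The claims that cells are ``definably homeomorphic to an open box'' and can be ``chained together by definable paths'' are not available over $(\R,<)$: without at least an ordered group structure there is in general no definable homeomorphism of a cell with a box, and definable choice and curve selection (the usual sources of such chaining paths) likewise require group structure in the standard references. Moreover, even purely topologically, chaining is where the difficulty sits: two cells of a connected definable set can be disjoint and touch only through their closures, and joining a point of one to a point of the other inside their union is precisely the sine-curve-type phenomenon that o-minimality must be invoked to exclude; your sketch assumes it away. The paper repairs this without any definable paths: it proves every definable set is locally path connected, reducing by cell decomposition and induction on dimension to showing $\{y\}\cup C$ is locally path connected for a cell $C$ and $y\in\overline{C}\setminus C$, using that $\{y\}\cup C$ has a countable basis of connected neighborhoods at $y$ together with the topological fact that a space with a countable connected basis at a point, whose complement of that point is locally path connected, is locally path connected. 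Since PCC only requires topological (not definable) paths, this is all that is needed; your finiteness argument for CC and your clopen argument for QCC in the o-minimal case are fine as they stand.
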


O\nbd minimal expansions of  $(\R,<)$ have been studied extensively. 
Expansions of $(\R,+,\cdot,\N)$ are too complicated for us to study as definability theory (every real projective set is definable).
The main underlying reason for starting this investigation was to see if certain properties of o\nbd minimal expansions of $(\R,+,\cdot\,)$ extend to some other settings.
Thus: \emph{Do there exist PCC structures on the real field that are generated by locally path connected sets, but neither define $\N$ nor are o\nbd minimal?}
Currently, we do not know.
However, we do have a significant partial result, as we now explain.

\subsubsection*{Notation}
Throughout,  $\mid$ indicates the usual divisibility relation on $\N$, regarded as  the set $\set{(n,dn): n,d\in \N}$. 

\medskip

By Robinson~\cite{MR0031446} , $(\N,<,\mid\,)$ is interdefinable with $(\N,+,\cdot\,)$, hence also with 
the expansion of $\N$ by all arithmetic sets (of any arity); we tend to use this fact without further mention.

\begin{thm}\label{dichot}
If $\rr$ is an expansion of $(\R,<,+)$ by boolean combinations of open subsets of various $\R^n$\tx, and is
PCC\tx, CC or QCC\tx,  then one of the following holds.
\begin{enumerate}
\item
$\rr$ is o\nbd minimal.
\item
There is a strictly increasing $\sigma\colon\N\to\R$ such that $\set{(\sigma(m),\sigma(n)): m \mid n}$ is definable.
\end{enumerate}
\end{thm}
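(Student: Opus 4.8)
The plan is to reduce everything to dimension one, extract from the failure of o\nbd minimality a definable set of order type $\omega$, and then exploit closure under components in higher dimension to define divisibility on that set.

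First I would record two elementary reductions. For a subset of $\R$ the connected subsets are exactly the intervals, so path components, components, and quasicomponents of a definable $X\subseteq\R$ all coincide: for quasicomponents, if $a<b$ lie in different components of $X$ then some $c\in(a,b)$ misses $X$, and $X\cap(-\infty,c)$, $X\cap(c,\infty)$ separate them. Hence for definable subsets of $\R$ each of PCC\tx, CC\tx, QCC says the same thing, namely that every component of every definable $X\subseteq\R$ is definable. Moreover $\rr$ is o\nbd minimal iff every definable subset of $\R$ is a finite union of points and open intervals, \ie\ has finitely many components. So if $\rr$ is not o\nbd minimal I fix a definable $X\subseteq\R$ with infinitely many components, each definable. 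Using $+$ (hence negation and translation) together with $<$, I would carve out of $X$ a definable infinite set $D$ of isolated points---for instance suitable endpoints of components---and, applying $x\mapsto-x$ if needed, arrange that $D$ has order type $\omega$ under $<$; let $\sigma\colon\N\to\R$ be its increasing enumeration. The target asks only for such a $\sigma$, so I may allow $D$ to be bounded and need not produce an unbounded closed discrete set.

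The substantive goal is then to define the relation $\set{(\sigma(m),\sigma(n)):m\mid n}$; by Robinson this yields a definable isomorph of $(\N,+,\cdot\,)$, giving alternative \tx{(2)}. The essential point is that a definable $\omega$\nbd chain alone does not suffice: $(\R,<,+,\Z)$ is a non-o\nbd minimal expansion of $(\R,<,+)$ by a closed (hence boolean combination of open) set, it defines an $\omega$\nbd chain, yet its theory is decidable and so it cannot define divisibility. Thus that structure must fail all of PCC\tx, CC\tx, QCC\tx, and the component-closure hypothesis must be used in an essential way; since for subsets of $\R$ it produced nothing beyond the chain, it must be used in dimension at least $2$, where path components, components, and quasicomponents genuinely differ.

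The crux is therefore to build, from $D$ and $+$, a definable set $E\subseteq\R^k$ whose path components (resp.\ components, resp.\ quasicomponents, according to which hypothesis is in force) are in definable correspondence with the pairs $(\sigma(m),\sigma(n))$ for which $m\mid n$, so that applying PCC/CC/QCC renders the divisibility relation definable. I expect this design---encoding iterated addition topologically, so that two prescribed pieces of $E$ lie in a common component exactly when one index divides the other---to be the main obstacle, both in exhibiting an $E$ that works simultaneously for all three closure notions and in verifying that its component structure captures divisibility precisely rather than something coarser. The hypothesis that $\rr$ is generated by boolean combinations of open sets I would use throughout this step to keep $E$ and its components tame enough to analyze. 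Granting such an $E$, one reads off definability of $\set{(\sigma(m),\sigma(n)):m\mid n}$ and concludes.
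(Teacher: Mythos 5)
Your skeleton matches the paper's: reduce to producing a definable set of order type $\omega$, then exploit component-closedness in higher dimension to define divisibility on it. But both substantive steps are missing from your outline, and the one concrete mechanism you propose for the first step fails. From a definable $X\subseteq\R$ with infinitely many components you propose to ``carve out'' an infinite definable set of isolated points (endpoints of components) and rearrange it into order type $\omega$. Take $X$ to be a Cantor set (closed, hence a boolean combination of open sets, and not excluded by any hypothesis): it has uncountably many components but no isolated points at all, its components are singletons so ``endpoints of components'' is all of $X$, and the definable sets of endpoints of its complementary gaps are dense in $X$, hence neither discrete nor of order type $\omega$, even after applying $x\mapsto -x$. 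This extraction is exactly where the paper spends its nontrivial outside input: by Dougherty--Miller and Dolich--Miller--Steinhorn, a non-o\nbd minimal expansion of $(\R,<,+)$ by boolean combinations of open sets defines an infinite \emph{discrete} $A\subseteq\R$, and then a case analysis on $F=\overline A\setminus A$ produces the range of a strictly increasing sequence (if $A$ is closed, use $A\cap[0,\infty)$ or $-A\cap[0,\infty)$; if $F$ has no isolated point, a compact portion of $F$ is a Cantor set and one takes the \emph{negatives of the lengths} of its complementary gaps, an essential use of $+$; if $F$ has an isolated point, one localizes $A$ near it). In particular, the boolean-combinations hypothesis is consumed entirely by this step, not, as you suggest, by keeping the higher-dimensional encoding ``tame enough to analyze.''

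The encoding step, which you rightly call the crux but explicitly leave as ``the main obstacle,'' is the content of Propositions~\ref{maintech} and~\ref{definesmidintro}: once $A$ is the range of a strictly increasing sequence, its successor function $f$ is definable from $A$ over $(\R,<)$, and one builds explicit staircase-like sets (unions of boundaries of boxes, shifted copies, and connecting segments, together with higher-dimensional gadgets encoding addition on the chain) whose path components, components and quasicomponents coincide and whose intersections with coordinate axes read off first $d\N$, then $+\rest\N^2$, then $\mid$. No tameness of $\rr$ is needed there: the components of these explicitly constructed sets are computed by inspection, and the argument applies to \emph{every} PCC, CC or QCC expansion of $(\R,<,f)$. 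So your proposal defers precisely the two results that constitute the theorem, and the one step you do make concrete---harvesting isolated points from endpoints of components---would break on a Cantor set.
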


As we shall see later,  every expansion of $(\R,<,+,\mid\,)$ by subsets of various $\N^n$ is PCC, CC and QCC.
There are expansions of $(\R,+,\cdot)$ for which condition~(2) holds and every definable set is a finite union of locally path connected definable sets---for some examples, see
Friedman and Miller~\cites{ominsparse,fast},
\cite{tameness},
Miller and Tyne~\cite{itseq},
and~\cite{MR3800759}---but 
we do not yet know whether any of these examples are PCC, CC or QCC.

\begin{cor}
If $\rr$ is a PCC expansion of $(\R,<,+)$ generated by topological submanifolds\tx,
and $\rr$ is not o\nbd minimal\tx, then no expansion of $\rr$ has a decidable theory. 
\end{cor}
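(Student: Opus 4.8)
The plan is to produce from $\rr$ a definable isomorph of $(\N,+,\cdot\,)$ and then invoke the undecidability of arithmetic. First I would verify that Theorem~\ref{dichot} applies. A topological submanifold $M\subseteq\R^n$ is locally compact and sits inside the locally compact Hausdorff space $\R^n$, hence is locally closed, hence equals $U\cap C$ with $U$ open and $C$ closed---a boolean combination of open subsets of $\R^n$. Thus $\rr$ is an expansion of $(\R,<,+)$ by boolean combinations of open sets, and since it is PCC but, by hypothesis, not o\nbd minimal, alternative~(2) of Theorem~\ref{dichot} holds: there is a strictly increasing $\sigma\colon\N\to\R$ with $D:=\set{(\sigma(m),\sigma(n)):m\mid n}$ definable.

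Next I would read off a definable copy of divisibility. As $\sigma$ is injective, $(\sigma(m),\sigma(n))\in D$ holds precisely when $m\mid n$; in particular $(x,x)\in D$ iff $x\in\sigma(\N)$, so $A:=\sigma(\N)=\set{x\in\R:(x,x)\in D}$ is definable and $\sigma$ is an isomorphism from $(\N,<,\mid\,)$ onto $(A,<\rest A,D)$. By Robinson's interdefinability of $(\N,<,\mid\,)$ with $(\N,+,\cdot\,)$, transporting the definitions of $+$ and $\cdot$ along $\sigma$ yields operations on $A$ that are definable in $\rr$ and make $A$ an isomorph of $(\N,+,\cdot\,)$; since $\sigma$ is a bijection, no quotient is needed. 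Hence $\rr$ defines a copy of full arithmetic on the definable set $A$.

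Finally I would deduce undecidability. The standard use of a definable copy of $(\N,+,\cdot\,)$ is a recursive, truth\nbd preserving translation of arithmetic into $\Th(\rr)$, whence $\Th(\rr)$ is undecidable because the true theory of arithmetic is. This undecidability then transfers to every expansion $\rr'$ of $\rr$: because $\rr$ is a reduct of $\rr'$ one has $\Th(\rr)=\Th(\rr')\cap\{\text{sentences of }\rr\}$, so a decision procedure for $\Th(\rr')$ would decide $\Th(\rr)$, which is impossible. I expect the only real point of care to be that $A$ is defined with parameters $\bar a$; to keep $\Th(\rr)$ parameter\nbd free I would send each arithmetic sentence $\chi$ to $\forall\bar y\,(\rho(\bar y)\to\chi^{\bar y})$, where $\rho$ relativizes the finitely many axioms of Robinson's essentially undecidable $Q$. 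Then $Q\vdash\chi$ forces the image into $\Th(\rr)$ (every parameter satisfying $\rho$ interprets a model of $Q$), while $Q\vdash\neg\chi$ keeps it out (witnessed by the standard parameter $\bar y=\bar a$), and recursive inseparability of provability and refutability in $Q$ gives the undecidability of $\Th(\rr)$. Apart from this routine maneuver the corollary is immediate from Theorem~\ref{dichot}, Robinson's theorem, and the undecidability of $Q$.
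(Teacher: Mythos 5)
Your proposal is correct and takes exactly the route the paper intends: the paper offers no separate proof of this corollary, treating it as immediate from Theorem~\ref{dichot} (topological submanifolds being locally compact, hence locally closed, hence boolean combinations of open sets) together with Robinson's interdefinability of $(\N,<,\mid\,)$ with $(\N,+,\cdot\,)$ and the standard undecidability-via-interpretation argument, which persists under expansion since the interpreting formulas remain available. Your parameter-handling step (relativizing to the axioms of Robinson's $Q$ and using recursive inseparability) is precisely the routine detail the paper suppresses.
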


This is perhaps not so surprising, but also not as discouraging as it might seem.
Definability is generally more important than abstract interpretability in the analytic geometry of expansions of $(\R,<)$.

\pagebreak

Theorem~\ref{dichot} relies on the following technical result.
\begin{prop}\label{maintech}
If $\emptyset \neq A\subseteq \R$ has empty interior and 
$f\colon A\to A$ is strictly increasing and strictly dominates the identity\tx, 
then there is a strictly increasing $\sigma\colon\N\to\R$ such that every expansion of $(\R,<,f)$
that is PCC\tx, CC or QCC defines $\set{(\sigma(m),\sigma(n)): m \mid n}$.
 \end{prop}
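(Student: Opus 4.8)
The plan is to fix a point $a_0\in A$ and set $\sigma(n)=f^n(a_0)$; since $f$ is strictly increasing with $f(x)>x$ on $A$, the sequence $(\sigma(n))$ is strictly increasing, and it suffices to define the orbit $D=\set{\sigma(n):n\in\N}$ together with the divisibility relation carried to $D$ by $\sigma$. The one structural fact I would extract at the outset from the hypothesis that $A$ has empty interior is that $A$ is \emph{totally separated}: any two of its points are split by a set that is clopen in $A$, because the dense complement supplies a separating real point. Consequently, for every definable $X\subseteq\R^k$ one of whose coordinates is constrained to lie in $A$, that coordinate is constant on each connected component of $X$---and likewise on each path component and each quasicomponent, since the continuous image in $A$ of any of these is a single point. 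This lets me treat the PCC, CC and QCC hypotheses uniformly, and it is the engine of the whole argument: it rules out unwanted connectivity in the ``data'' directions.

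First I would define $D$. The idea is to present $D$ as the projection of a distinguished component of a definable planar set assembled, using only $<$ and $f$, from $\operatorname{graph}(f)$, the diagonal, and order-intervals $[x,f(x)]$---a staircase joining the successive corners $(\sigma(n),\sigma(n))$---and then to invoke closure under components to make that component, hence $D$, definable. The delicate point, forced by total separation, is that a component cannot move within an $A$\nobreakdash-coordinate, so the link from one corner to the next must be routed through the gaps $\R\setminus A$; the staircase must be arranged so that the component through $(a_0,a_0)$ meets exactly the orbit corners.

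With $D$ and the induced successor $s=f\restriction D$ in hand, I would encode $m\mid n$ by coupling two copies of the successor dynamics: a counter that cycles with period $m$ and an accumulator that advances once per completed cycle, so that the accumulator sits at $\sigma(n)$ precisely when $m\mid n$. I would realize this coupling as the component, through a distinguished base point, of a definable set whose genuine one-dimensional motion takes place in auxiliary real coordinates threading the gaps while passing through the relevant orbit points; then $\set{(\sigma(m),\sigma(n)):m\mid n}$ is read off by intersecting with the definable ``reset'' locus and projecting, after which PCC/CC/QCC makes the component definable.

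The main obstacle is exactly the tension created by total separation. On one hand it cleanly forbids runaway connectivity in the orbit coordinates, which is what lets me pin components down; on the other hand it forces every link between two distinct orbit points to pass through the dense complement, and a naive linking set with orbit points as box- or segment-corners will, when $A$ is dense, inevitably run into the densely many intermediate points of $A$ and absorb their fibers into the component, collapsing the encoding. The heart of the proof is therefore the design---using nothing beyond $<$, $f$ and the domination $f(x)>x$---of a definable linking set whose component through the base point threads the gaps so as to visit exactly the intended orbit points and stay disjoint from the intermediate structure, together with a check that the resulting family is uniformly definable in $m$, so that the full predicate $\set{(\sigma(m),\sigma(n)):m\mid n}$, and not merely each of its sections, is captured.
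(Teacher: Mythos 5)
Your high-level plan is in fact the paper's plan: extract the orbit $D=\set{f^n(a_0):n\in\N}$ as the trace of a distinguished component of a definable ``staircase,'' then capture divisibility by further component extractions, with explicit worry about uniformity in $m$; and your ``engine'' (a connected subset of a set with empty interior is a point, so $A$\nbd valued continuous data is constant on components and path components --- for quasicomponents one should argue instead via the clopen split $\{x_i<c\}$, $c\in(a,b)\setminus A$, since quasicomponents need not be connected) is exactly the fact the paper exploits. The genuine gap is that you defer what you yourself call ``the heart of the proof'': the actual design of a linking set whose component visits only the orbit, and such a design is the entire mathematical content of the step. Moreover, the planar candidate you name (a staircase assembled from the graph of $f$, the diagonal, and the order-intervals $[x,f(x)]$, joining the corners $(\sigma(n),\sigma(n))$) genuinely fails when $A$ is dense. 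Take $A=\Q\cap[0,\infty)$ and $f(x)=x+1$: the definable union $\bigcup_{a\in A}\bigl(\{a\}\times[a,f(a)]\,\cup\,[a,f(a)]\times\{f(a)\}\bigr)$ has the vertical link at $a$ crossing the horizontal links of every $b\in[a-1,a]\cap\Q$ (at the point $(a,b+1)$), so the component through $(a_0,a_0)$ absorbs the corners $(b+1,b+1)$ for densely many off-orbit $b$, and the encoding collapses --- precisely the failure you predict but do not repair. The paper's repair is to leave the plane: it reruns the three-dimensional construction $\mathbb S_1$ from the proof of Proposition~\ref{definesmidQ} with $A$ in place of $\N$ and $f$ in place of successor --- nested box boundaries $\partial\bigl((-a,a)^3\bigr)$ intersected with the boundary of the positive octant, with the face in the plane $y=0$ shifted by $f$ and the gaps filled by segments. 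Because the boxes are nested level sets of the max-norm and $f$ is increasing ($b\geq a$ implies $f(b)\geq f(a)$), the piece attached to box $a$ meets the rest of the set only at the single point $(f(a),0,0)$ on box $f(a)$; hence the path component of $(f(0),0,0)$, which is also a component and a quasicomponent, meets the positive $x$\nbd axis exactly in the orbit.

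The divisibility half has the same defect. Your counter-and-accumulator coupling is an intention, not a construction, and the uniformity issue you flag is the real obstruction: component extraction hands you each section $\set{\sigma(n):m\mid n}$ one $m$ at a time, whereas the statement demands the single set $\set{(\sigma(m),\sigma(n)):m\mid n}$. The paper gets uniformity by first defining \emph{addition} on the integer points --- a separate seven-dimensional staircase whose links are routed through different auxiliary coordinates according to the parities of $m$ and $n$, precisely to avoid the self-crossings that would otherwise recreate the collapse above --- and only then performing one last component extraction on $\ell\cup\bigcup_d(\mathbb S_d\times\{d\})$. You also skip a small but real point: the orbit $D$ may accumulate at a finite value, so it need not be closed and discrete in $\R$; the paper sidesteps this by noting that once the orbit is definable one may replace $A$ by it, so that $\bigl((-\infty,\sup A),<,A\bigr)$ is isomorphic to $(\R,<,\N)$, and then quote Proposition~\ref{definesmidintro}. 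As it stands, your text is a correct strategic outline together with an accurate diagnosis of why the strategy is hard to execute, but the two constructions that would constitute the proof are missing.
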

 (By ``strictly dominates the identity'' we mean that $x<f(x)$ for all $x$ in the domain of $f$.
 For logicians: The function $f$ is regarded as a subset of $\R^2$, not the interpretation of a unary function symbol.)  
 A key step in the proof is the special case that $A=\N$ and $f$ is the usual successor function on $\N$.

\begin{prop}\label{definesmidintro}
Every expansion of $(\R,<,\N)$ that is PCC\tx, CC or QCC defines $\mid$\,.
\end{prop}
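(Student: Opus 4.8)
The plan is to work entirely inside the definable copy of $\N$ and to manufacture $\mid$ from the component operator, recalling (Robinson) that defining $\mid$ on $\N$ is equivalent to defining all of first-order arithmetic. First I would record the elementary definability facts: each natural number is $\emptyset$\nbd definable (as $0=\min\N$ and $n+1=s(n)$), so the successor function $s$ on $\N$, the order $\le$, and the integer-part map $x\mapsto\grint{x}$ on $[0,\infty)$ are all definable, giving a definable copy of $(\N,<,s)$ together with the "room" provided by the open unit intervals. Since $\mid$ on $\N$ lives on the integer lattice, it suffices to define a subset of $\N^2$.

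The crucial structural observation, which both motivates the strategy and pinpoints the difficulty, is a symmetry constraint. Let $G$ be the group of increasing bijections of $\R$ fixing every integer, equivalently the maps acting on each $[n,n+1]$ by an arbitrary increasing self-bijection. Each $g\in G$ is a homeomorphism of every $\R^k$ and preserves $(\R,<,\N)$; as path components, components, and quasicomponents are topological invariants, the least PCC (resp.\ CC, QCC) expansion of $(\R,<,\N)$ is $G$\nbd invariant, so every definable set is $G$\nbd invariant. In particular no slanted translate such as $\set{(x,y):y=x+1}$ is definable, so there is no hope of importing $+$ from $\R$; on the other hand every subset of $\N^k$ is $G$\nbd invariant, so the target $\set{(m,n):m\mid n\,}$ is not excluded. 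The point is that a first-order $(\R,<,\N)$\nbd condition can depend only on the integer parts and the local order of coordinates, which is essentially $(\N,<)$\nbd definable and far too weak for $\mid$; the arithmetic must instead come from the \emph{global} content of the component operator, since for any definable $E$ the $G$\nbd invariant relation "$u$ and $v$ lie in the same path component of $E$" is governed by global reachability and can encode arithmetic.

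The main construction I would carry out is therefore to encode an arithmetic relation as a same-component relation. I would build a definable planar (or, if needed, three-dimensional) set $E$ that is combinatorially \emph{uniform} over the integer grid---its trace on each cell fixed by a single order pattern---but whose cells are chained so that two lattice points are joined by a path in $E$ exactly when a prescribed arithmetic relation holds between their coordinates. The natural first target is the graph of addition, realized as a meeting-of-staircases gadget: two definable broken-line configurations of controlled slopes issuing from the points coding $a$ and $b$ are arranged to intersect precisely at the point coding $c=a+b$, so that connectivity detects $a+b=c$. Every piece must be assembled from $G$\nbd invariant building blocks, namely grid segments, the main diagonals $\set{x_i=x_j}$, and region fillers. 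With $+$ on $\N$ in hand I would then bootstrap, using the enriched language to set up a second gadget whose global connectivity computes repeated addition, yielding multiplication and hence $\mid$, and finally package the result through Robinson. To treat PCC, CC and QCC at once I would take each $E$ to be a locally finite union of closed, locally connected pieces, on which the three notions of component coincide, and then verify that no two intended-distinct components are fused, including by accumulation or at the ends of the configuration.

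The hard part will be the gadget itself: producing a locally uniform (hence first-order weak) definable set whose \emph{global} connectivity genuinely computes a non-Presburger relation, and doing so robustly enough that path-, connected-, and quasicomponents agree and separate exactly along the intended relation. Controlling the chaining so that distinct sums (and later distinct residue classes) are never inadvertently connected---and remain unconnected for quasicomponents, which merge more readily than path components---is the delicate point, as is ensuring that the bootstrap from successor through addition terminates at $\mid$ rather than at some weaker fragment.
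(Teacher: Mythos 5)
You have correctly diagnosed where the arithmetic must come from: local data over $(\R,<,\N)$ is order-theoretically trivial, so $\mid$ has to be extracted from the \emph{global} connectivity of grid-patterned definable sets, and your invariance observation (under the group of increasing bijections of $\R$ fixing $\Z$ pointwise) is a sound sanity check that the paper does not even state. Your plan to build the gadgets from closed, locally path connected pieces so that path components, components and quasicomponents coincide on them also matches the paper's device for handling PCC, CC and QCC simultaneously (the paper in fact proves a stronger statement over $(\Q,<,\N)$, assuming only that every definable set is a union of maximal definably connected sets, and then observes that over $\R$ the relevant sets are at once components, quasicomponents and path components). But as a proof there is a genuine gap: the entire mathematical content of the paper's argument \emph{is} the explicit construction and verification of the gadgets, and you defer exactly that (``the hard part will be the gadget itself''). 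Nothing in the proposal certifies that a definable set with the required global-connectivity behavior exists, and the verification that intended-distinct chains are not fused is precisely where such constructions live or die.

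Moreover, your bootstrap order (successor, then addition, then multiplication, then $\mid$) runs into a concrete obstruction that the paper's order is designed to avoid. To keep adjacent staircase chains from fusing, the paper's addition gadget $X\subseteq\Q^7$ assigns separating ``colors'' in auxiliary coordinates according to the parities of $m$ and $n$; consequently $X$ is definable only in $(\Q,<,\N,2\N)$, not in $(\Q,<,\N)$ alone. The paper therefore \emph{first} defines each $d\N$ (in particular $2\N$) by a different, uncolored gadget: the sets $\mathbb S_d\subseteq\R^3$ built from nested cube shells with part of each shell shifted $d$ units and rejoined by segments, whose component through $(d,0,0)$ meets the positive $x$\nbd axis exactly in $\set{(d(n+1),0,0):n\in\N}$. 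Only then does it build the colored staircase to define $+\rest\N^2$, and finally---rather than a repeated-addition multiplication gadget---it uses $+$ to uniformize the family $\mathbb S_d\times\{d\}$ and one more component extraction to define $\bigcup_{d\in\N}(d\N\times\{d\})$, which gives $\mid$ directly via $a\mid b\Leftrightarrow b\N\subseteq a\N$, bypassing multiplication entirely. Your proposal provides no mechanism for obtaining any predicate beyond $(\N,<)$ before the addition gadget, so the chain-separation problem you flag at the end is not merely delicate: as planned, it is unresolved, and resolving it is essentially the missing half of the paper's proof.
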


Via quantifier elimination in an appropriate language, it is easy to see that 
$(\R,<,\N)$ does not define $\set{2n: n\in\N}$, and so $(\R,<,\N)$ is neither PCC, CC nor QCC.

It appears to us at this time that dealing with arbitrary expansions of $(\R,<,+)$, especially those that define a bijection between a bounded interval and an unbounded interval, is daunting. 
Thus, we first attempt to understand some less complicated structures.
 
\textbf{From now on:} $\rr$ denotes a fixed, but arbitrary, expansion of $(\R,<)$.

We say that $\rr$ is \textbf{locally o\nbd minimal} if for each definable $E\subseteq \R$ and $x\in \R$ there is an open neighborhood $U$ of $x$ such that $E\cap U$ is a finite union of points and open intervals.\footnote{One might find different definitions in the literature, but as far as we know, all of them are equivalent over $(\R,<)$.}
Evidently, every o\nbd minimal expansion of $(\R,<)$ is locally o\nbd minimal, but there are locally o\nbd minimal expansions of $(\R,<)$ that are not o\nbd minimal. 
Of particular importance in this paper is that the expansion of $(\R,<,+)$ by all subsets of each $\N^n$ (as $n$ ranges over positive integers),  is locally o\nbd minimal 
(see, \eg, \cite{ominsparse}), 
and so the same is true of all of its reducts. 
Thus, $(\R,<,\N)$ is locally o\nbd minimal, but neither PCC, CC nor QCC, 

\begin{prop}\label{localominequivs}
The following are equivalent.
\begin{enumerate}
\item
$\rr$ is locally o\nbd minimal.
\item
If $E\subseteq \R^n$ is bounded and definable\tx, then the expansion of $(\R,<)$ by all definable subsets of $E$ is o\nbd minimal.
\item
Every definable set is locally path connected.
\item
For every definable set\tx, its components are the same as its path components and its quasicomponents.
\item
Components of bounded definable subsets of $\R^2$ are path connected.
\item
Quasicomponents of bounded definable subsets of $\R^2$ are connected.
\item
If $A\subseteq \R$ is definable and has empty interior\tx, then $A$ is closed and discrete.
\end{enumerate}
\end{prop}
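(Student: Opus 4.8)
The plan is to establish the seven-fold equivalence by the cycle $(1)\Rightarrow(2)\Rightarrow(3)\Rightarrow(4)\Rightarrow(5)\Rightarrow(7)\Rightarrow(1)$ together with the side implications $(4)\Rightarrow(6)\Rightarrow(7)$; since $(7)$ re-enters the main cycle, all seven conditions collapse. A preliminary observation used throughout is that over $(\R,<)$ the operations of interior, closure and topological boundary are uniformly definable (for instance $x\in\operatorname{int}(S)$ iff some open interval about $x$ lies in $S$), so these operations preserve definability and may be applied freely. As the bridge $(7)\Rightarrow(1)$, given a definable $A\subseteq\R$ I would note that $A\setminus\operatorname{int}(A)$ and the boundary $\partial(\operatorname{int}(A))$ are both definable with empty interior, hence closed and discrete (so locally finite) by $(7)$; near any point this makes $\operatorname{int}(A)$ a finite union of open intervals and $A\setminus\operatorname{int}(A)$ a finite set, yielding $(1)$.

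The step $(1)\Rightarrow(2)$ is where I expect the main obstacle to lie. Let $E\subseteq\R^n$ be bounded and definable and let $\rr_E$ be the expansion of $(\R,<)$ by all $\rr$-definable subsets of $E$. Every $\rr_E$-definable $X\subseteq\R$ is $\rr$-definable (all primitives of $\rr_E$ are), so by $(1)$ it is locally a finite union of points and intervals; the issue is promoting this to a genuinely finite union. Fixing parameters for $X$, choose $M$ so large that $E$ and all parameters lie in $[-M,M]$ coordinatewise. Any increasing self-homeomorphism of $\R$ fixing $[-M,M]$ pointwise preserves $<$ and fixes each generating predicate (these are subsets of $[-M,M]^n$), hence is an automorphism of $\rr_E$ fixing the parameters, so it preserves $X$. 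Letting its restriction to $(M,\infty)$ range over all increasing self-homeomorphisms of that ray shows $X\cap(M,\infty)$ is invariant under a transitive action, hence is $\emptyset$ or $(M,\infty)$, and similarly below $-M$. On the compact $[-M,M]$, local o-minimality together with a finite subcover makes $X\cap[-M,M]$ a finite union of points and intervals. Thus $X$ is a finite union of points and intervals, so $\rr_E$ is o-minimal.

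For $(2)\Rightarrow(3)$, given definable $E\subseteq\R^n$ and $p\in E$, I would pick a bounded open box $B\ni p$ and set $E'=E\cap\overline{B}$, bounded and definable; by $(2)$ the structure $\rr_{E'}$ is o-minimal, $E\cap B=E'\cap B$ is $\rr_{E'}$-definable, and definable sets in o-minimal structures are locally path connected (e.g.\ by cell decomposition), so $E$ is locally path connected at $p$, and $(3)$ follows. The implication $(3)\Rightarrow(4)$ is standard topology: in a locally path connected, hence locally connected, space the path components, components and quasicomponents all coincide. Then $(4)\Rightarrow(5)$ and $(4)\Rightarrow(6)$ are immediate, since a component equal to a path component is path connected and a quasicomponent equal to a component is connected.

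It remains to close the loop with $(5)\Rightarrow(7)$ and $(6)\Rightarrow(7)$, which I would prove contrapositively. If $(7)$ fails, some definable $A\subseteq\R$ with empty interior has an accumulation point $c$, approached (after possibly reflecting) from the right; fix $d>c$ and heights $h_0<h_1$, and note that $\{c\}\cup(A\cap(c,d))$ has empty interior and so is totally disconnected. For $\neg(5)$ use the deleted-comb set $D=([c,d]\times\{h_0\})\cup((A\cap(c,d))\times[h_0,h_1])\cup\{(c,h_1)\}$, which is bounded, definable and connected (the teeth accumulate onto $(c,h_1)$); any path starting at $(c,h_1)$ has totally disconnected first coordinate near $0$, hence constant equal to $c$, forcing the path to be constant, so $D$ is connected but not path connected. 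For $\neg(6)$ use $X=\{(c,h_0),(c,h_1)\}\cup((A\cap(c,d))\times[h_0,h_1])$: projecting to the first coordinate shows $(c,h_0)$ and $(c,h_1)$ lie in different components, while every clopen set containing one must contain the other (by connectedness of each individual tooth together with closedness), so they share a disconnected quasicomponent. Each construction contradicts the named hypothesis. The one point requiring care here is that these arguments must handle an arbitrary accumulation rather than a discrete sequence; this is fine, since only connectedness of the single teeth and total disconnectedness of $\{c\}\cup A$ are used.
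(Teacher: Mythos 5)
Your proposal is correct and takes essentially the same approach as the paper: the identical implication graph ($(1)\Rightarrow(2)\Rightarrow(3)\Rightarrow(4)$, then $(4)\Rightarrow(5)$ and $(4)\Rightarrow(6)$, with $(5)\Rightarrow(7)$, $(6)\Rightarrow(7)$, $(7)\Rightarrow(1)$), the same comb-space counterexamples for $(5)\Rightarrow(7)$ and $(6)\Rightarrow(7)$, and the same closed-and-discrete, empty-interior argument for $(7)\Rightarrow(1)$. The differences are only in level of detail---you supply the automorphism argument the paper leaves as ``routine'' in $(1)\Rightarrow(2)$ and you correctly keep the counterexamples bounded, as $(5)$ and $(6)$ require (the paper's outline uses an unbounded base); the one phrase to repair is ``after possibly reflecting,'' since $x\mapsto -x$ need not be definable over $(\R,<)$, so in the left-accumulation case you should instead run the mirrored construction with $A\cap(d,c)$, $d<c$, which works verbatim.
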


\begin{cor}\label{localomincor}
If $\rr$ is locally o\nbd minimal\tx, then it is PCC if and only if it is CC\tx, if and only if it is QCC. 
\end{cor}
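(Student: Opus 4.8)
The plan is to read this off from the equivalence of conditions~(1) and~(4) in Proposition~\ref{localominequivs}. By hypothesis $\rr$ is locally o\nbd minimal, which is condition~(1); hence condition~(4) holds, meaning that for every definable set $E$ the partition of $E$ into path components, the partition into components, and the partition into quasicomponents all coincide. In general one has only that path components refine components, which in turn refine quasicomponents, so the force of local o\nbd minimality here is precisely to collapse these three partitions into one.

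Given this, I would argue that PCC, CC, and QCC become literally the same assertion. Fix a definable $E\subseteq\R^n$. By condition~(4), the path components of $E$ are exactly its components and exactly its quasicomponents, \emph{as subsets of $E$}. Thus ``each path component of $E$ is definable'', ``each component of $E$ is definable'', and ``each quasicomponent of $E$ is definable'' are the same statement about the same sets. Quantifying over all definable sets of all arities then yields that $\rr$ is PCC if and only if it is CC if and only if it is QCC.

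I do not expect a substantive obstacle: the entire content is supplied by Proposition~\ref{localominequivs}. The only point needing attention is that PCC, CC, and QCC each quantify over definable subsets of every $\R^n$, but condition~(4) does so as well, so the coincidence of the three notions of connectedness is available uniformly across all arities and no additional argument is required.
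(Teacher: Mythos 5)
Your proposal is correct and is exactly the argument the paper intends: the corollary is stated without separate proof precisely because it follows immediately from Proposition~\ref{localominequivs}, implication (1)$\Rightarrow$(4), which makes the path components, components, and quasicomponents of every definable set literally the same sets, so the three closure conditions become one statement. No gap; this matches the paper's route.
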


Thus, when dealing with locally o\nbd minimal expansions of $(\R,<)$, we may work with any of the conditions (PCC, CC or QCC) as convenient.  
 
 \pagebreak 

Next is our most interesting class of examples to date.
 
\begin{prop}\label{main}
Let $\rr$ be an expansion of $(\R,<,+,\mid\, )$ by subsets of various $\N^n$.
\begin{enumerate}
\item
Every bounded definable set is definable in $(\R,<,+)$.
\item
If $E$ is definable and connected\tx, then for all $x,y\in E$ there is a path in $E$ definable in $(\R,<,+)$ joining $x$ to $y$. 
\item
$\rr$ is CC.
\end{enumerate}
 \end{prop}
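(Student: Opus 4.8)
The plan is to treat $\rr$ as living over two ``sorts'': the linear o\nbd minimal structure $(\R,<,+)$ and the arithmetic structure on $\N$, which via $\mid$ and Robinson's theorem carries full arithmetic, hence all arithmetic sets. Throughout I use that $\rr$ is a reduct of the expansion of $(\R,<,+)$ by \emph{all} subsets of all $\N^n$, which is locally o\nbd minimal; hence $\rr$ is locally o\nbd minimal, and by Proposition~\ref{localominequivs} components, path components and quasicomponents of definable sets coincide and every definable set is locally path connected. I also record that $\rr$ defines $\Z$, the integer part $\lfloor\cdot\rfloor$, and translation by any fixed vector (with parameters), so each $x\in\R^m$ splits as $x=\lfloor x\rfloor+(x-\lfloor x\rfloor)$ with $\lfloor x\rfloor\in\Z^m$ and $x-\lfloor x\rfloor\in[0,1)^m$.

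The engine for all three parts is a relative quantifier elimination: every definable $E\subseteq\R^m$ is a finite boolean combination of semilinear sets (definable in $(\R,<,+)$) and sets of the form $\set{x:(\lfloor L_1(x)\rfloor,\dots,\lfloor L_k(x)\rfloor)\in\Sigma}$ with the $L_i$ linear and $\Sigma\subseteq\Z^k$ arithmetic. I would establish this in the style of the sparse-set analysis of~\cite{ominsparse}; this is the step I expect to be the main obstacle, as it is what converts topological questions about $\rr$ into a clean interaction between semilinear geometry and arithmetic. Granting it, the slices $E^{(v)}:=(E\cap(v+[0,1)^m))-v\subseteq[0,1)^m$ over boxes $v\in\Z^m$ are boolean combinations of the finitely many fixed semilinear pieces, the choice being governed arithmetically by $v$; thus there are only finitely many ``box shapes'' $T_1,\dots,T_M\subseteq[0,1)^m$ together with an arithmetic colouring $c\colon\Z^m\to\set{1,\dots,M}$ with $E^{(v)}=T_{c(v)}$.

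Part~(1) is then immediate: if $E$ is bounded, each $\lfloor L_i\rfloor$ ranges over a finite set on $E$, so each condition $(\lfloor L_1\rfloor,\dots,\lfloor L_k\rfloor)\in\Sigma$ restricts to a finite union of cells $\set{x:\lfloor L_i(x)\rfloor=n_i\ \forall i}=\set{x:n_i\le L_i(x)<n_i+1\ \forall i}$, each semilinear; hence $E$ is a finite boolean combination of semilinear sets, i.e.\ $(\R,<,+)$\nbd definable. Part~(2) follows from~(1) together with o\nbd minimality of $(\R,<,+)$: given definable connected $E$ and $x,y\in E$, local path connectedness makes $E$ path connected, so a path from $x$ to $y$ exists, its image is compact, hence lies in some $[-j,j]^m$, and $x,y$ lie in one path component of the bounded set $E_j:=E\cap[-j,j]^m$. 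By~(1), $E_j$ is $(\R,<,+)$\nbd definable, and in the linear o\nbd minimal structure $(\R,<,+)$ two points of one path component of a definable set are joined by a definable (piecewise linear) path; this path lies in $E_j\subseteq E$ and is as required.

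For Part~(3), by Corollary~\ref{localomincor} it suffices to produce, for each definable $E$ and each component $C$, an $\rr$\nbd definable description of $C$. Fixing $x_0\in C$, local path connectedness identifies $C$ with the path component of $x_0$, and the box-decomposition reduces path connectivity to a reachability problem: form the graph whose vertices are pairs $(v,\text{a path component of }T_{c(v)})$ and whose edges record, according to the finitely many shape-pairs, when components of adjacent boxes meet along their common face; then $y\in C$ iff the vertex of $y$ is reachable from that of $x_0$. The decisive point---and the second place where real work is needed---is that this reachability is \emph{arithmetic}: a connecting chain of boxes is a finite sequence in $\Z^m$, codable by a single natural number because $\rr$ interprets $(\N,+,\cdot)$, and the step relation is arithmetic in $c$; one then checks that graph reachability faithfully captures topological connectivity, handling faces, edges and corners of the half\nbd open boxes with care. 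This is what lets the genuinely unbounded, arithmetically growing component combinatorics of $E$ be captured by a single $\rr$\nbd formula, yielding that $C$ is definable and hence that $\rr$ is CC.
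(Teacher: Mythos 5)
Your plan reproduces the paper's own architecture almost exactly: a relative quantifier elimination splitting $\rr$ into the semilinear ``sort'' $(\R,<,+)$ and the induced structure on $\Z$, box-by-box semilinearity giving parts (1) and (2), and a definable adjacency graph whose reachability relation captures connectivity for part (3). But the two places you flag as ``where real work is needed'' are precisely where the paper does its work, so what you have is a correct outline rather than a proof. The engine you defer is the paper's result~\ref{QE}: quantifier elimination for the theory $T$, obtained by combining $T_\Q'$ (whose QE comes from \cite{dloivp}) with the relativization to $Z$ of the theory of the induced structure $\mathfrak Z$ on $\Z$, via the term-analysis lemmas \ref{terms}, \ref{zodfbl} and \ref{locdefine}. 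The paper then extracts your ``finitely many box shapes with a definable colouring'' by model-theoretic compactness (its templates $Y_j$ and parameter maps $F_j$), and your part~(3) graph is its closure-adjacency graph $G$ on the fibers $S_a$, whose graph components are definable because they are computably enumerable relative to $A$, hence definable in $(\Z,+,\cdot,A)$; your parts (1) and (2) are its applications of \ref{locdefine} and \ref{uniflocominpath}.

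Beyond the deferred steps, one concrete error needs fixing: your normal form requires $\Sigma\subseteq\Z^k$ to be \emph{arithmetic}, but $\rr$ is an expansion by \emph{arbitrary} subsets of the $\N^n$, which need not be arithmetic. If $P\subseteq\N$ is non-arithmetic, then $P$ is definable in $\rr$ but is not a boolean combination of semilinear sets and sets $\set{x:(\grint{L_1(x)},\dots,\grint{L_k(x)})\in\Sigma}$ with $\Sigma$ arithmetic. The correct statement, and what \ref{QE} delivers, is that $\Sigma$ ranges over sets definable in $\mathfrak Z$ (which contains $\mid$, hence all arithmetic sets, \emph{and} the given predicates); likewise your colouring $c$ and your reachability relation are definable in $\mathfrak Z$ --- arithmetic \emph{relative to} the given predicates, as in the paper's phrasing ``computably enumerable from $A$'' --- rather than arithmetic outright. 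With that repair, and granting the QE, your reductions are sound; note also that your finitely-many-shapes claim needs the small but necessary observation that the semilinear data over the box at $z\in\Z^m$ involves parameters that are rational-affine in $z$, whose fractional parts take only finitely many values as $z$ varies.
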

 
 \subsubsection*{Remarks}
(a)~Thus, $\rr$ is not just locally o\nbd minimal, but ``locally o\nbd minimal with respect to $(\R,<,+)$''.
(b)~Trivially, the theory of the expansion  of $\R$ by all subsets of each $\N^n$ interprets every countable theory. Thus, in contrast to its local geometry, the model theory of $\rr$ can be quite wild.
(c)~The group structure is not necessary for either~(1) or~(3) to hold, but~(2) can fail. 
To illustrate, it is not hard to check (or see~\cite{ccparis}) that the expansion  of $(\R,<)$ by all subsets of each $\N^n$ is CC, and every definable subset of any $(-\infty,b)^n$ with $b\in\R$ is definable in $(\R,<)$. 
But there is no definable path joining any point in $(-\infty,1)\times (-\infty,2)$ to the point $(1,2)$. 
 
\medskip
 
Of course, $\N$ is $\emptyset$\nbd interdefinable with $\Z$ over $(\R,<,+)$.
By combining results~\ref{definesmidintro}, \ref{localomincor} and \ref{main}, 
we obtain the following characterization. 

\begin{thm}\label{newmainthm}
If $\rr$ is an expansion of $(\R,<,+,\Z)$ by subsets of various $\Z^n$\tx,
then $\rr$ is PCC\tx, CC or QCC if and only if it defines all arithmetic sets. 
\end{thm}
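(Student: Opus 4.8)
The plan is to prove both implications by assembling the results cited in the statement, after first pushing everything down to $\N$. Since $\N$ is $\emptyset$\nbd interdefinable with $\Z$ over $(\R,<,+)$, and a subset of $\Z^n$ is interdefinable over $(\R,<,+,\N)$ with a subset of $\N^n$ (recode the sign pattern coordinatewise via the definable bijection $\Z\to\N$), I would first replace $\rr$ by an interdefinable structure presented as an expansion of $(\R,<,+,\N)$ by subsets of various $\N^n$. In this presentation $\rr$ is a reduct of the expansion of $(\R,<,+)$ by all subsets of each $\N^n$, hence locally o\nbd minimal; by Corollary~\ref{localomincor} the conditions PCC, CC and QCC are then equivalent for $\rr$, so the disjunction in the statement is really a single hypothesis and it suffices to work with CC throughout.

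For the forward implication, assume $\rr$ is PCC, CC or QCC. As $\rr$ expands $(\R,<,\N)$, Proposition~\ref{definesmidintro} shows that $\rr$ defines $\mid$\,. Now $\rr$ defines the set $\N$ together with the restrictions of $<$ and $+$, and by Robinson~\cite{MR0031446} the structure $(\N,<,\mid\,)$ is interdefinable with $(\N,+,\cdot\,)$; hence $\rr$ defines multiplication on $\N$, and therefore the whole of $(\N,+,\cdot\,)$ on the definable set $\N$. Relativizing quantifiers to $\N$, every set definable in $(\N,+,\cdot\,)$---that is, every arithmetic set---is definable in $\rr$.

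For the converse, assume $\rr$ defines all arithmetic sets. In particular it defines $\mid$ (divisibility being arithmetic), so in the $\N$\nbd presentation above $\rr$ is an expansion of $(\R,<,+,\mid\,)$ by subsets of various $\N^n$: adjoining the already-definable relation $\mid$ changes nothing, and $\N$ is recovered from $\mid$ by projection. Proposition~\ref{main}(3) then yields that $\rr$ is CC, which is one of the three conditions (and, by local o\nbd minimality, all three hold). This closes the equivalence.

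The substantive content is carried entirely by Propositions~\ref{definesmidintro} and~\ref{main} and by Robinson's interdefinability result, so the theorem is essentially their synthesis and I do not expect a genuine analytic or combinatorial obstacle. The one point that must be handled with care is the bookkeeping of the recoding between $\Z$\nbd predicates and $\N$\nbd predicates, so that $\rr$ is seen to satisfy \emph{verbatim} the hypothesis ``expansion of $(\R,<,+,\mid\,)$ by subsets of various $\N^n$'' of Proposition~\ref{main}; once the language is normalized in this way, both directions are immediate.
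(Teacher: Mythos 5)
Your proposal is correct and follows exactly the route the paper intends: the paper derives Theorem~\ref{newmainthm} precisely by combining Proposition~\ref{definesmidintro} (for the forward direction, via Robinson's interdefinability of $(\N,<,\mid\,)$ with $(\N,+,\cdot\,)$), Proposition~\ref{main}(3) (for the converse), and Corollary~\ref{localomincor} together with local o\nobreakdash-\hspace{0pt}minimality to identify the three conditions, with the $\Z$/$\N$ recoding handled by the stated $\emptyset$\nobreakdash-\hspace{0pt}interdefinability over $(\R,<,+)$. Your explicit bookkeeping of the passage between $\Z$\nobreakdash-\hspace{0pt}predicates and $\N$\nobreakdash-\hspace{0pt}predicates is exactly the detail the paper leaves implicit.
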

 
We now say more about the original motivation for this work. 
One might be interested in understanding ``reachability'' properties of given $E\subseteq \R^n$. 
This can be made more precise by considering what is known if $(\R,+,\cdot,E)$ is o\nbd minimal. 
Since path components of $E$ are definable, it suffices to consider the case that $E$ is path connected; then it is path connected in a strong sense:
(a)~there is a definable map $\gamma\colon E^2\times [0,1]\to E$ such that, for all $x,y\in E$, the path $t\mapsto \gamma(x,y,t)\colon [0,1]\to E$ is rectifiable and joins $x$ to $y$; 
(b)~if moreover $E$ is compact, then there is a definable function $f\colon [0,\infty)\to [0,\infty)$ 
such that, for all $x,y\in E$, the length of $t\mapsto \gamma(x,y,t)$ is bounded by $f(\lVert x-y\rVert)$. 
(See van den Dries and Miller~\cite{geocat}*{4.21} for details.)
These are desirable properties in real-analytic geometry and control theory, and we hope that at least something similar will hold for certain cases that $(\R,+,\cdot,E)$ is not o\nbd minimal.
Here is a case of particular interest. 
Let $\omega$ be a nonzero real number, $S$ be the logarithmic spiral 
$
\set{e^t\cos(\omega t),e^t\sin(\omega t): t \in \R}
$,
and $E$ be definable in $(\R,+,\cdot,S)$.
Evidently,  $(\R,+,\cdot,S)$ is not locally o\nbd minimal, but it is known to have a number of desirable analytic-geometric properties; 
see, \eg, \cite{linear}*{4.4}, \cite{cpdmin}, and Tychonievich~\cite{tychthesis}*{Theorem~4.1.1}.
Some of these properties, in combination with Theorem~\ref{dichot}, reveal that $(\R,+,\cdot,S)$ is not PCC, but perhaps there might be PCC expansions of  $(\R,+,\cdot,S)$ in which many of these good properties, or variants thereof, are preserved.
See  also 
Aschenbrenner and Thamrongthanyalak~\cite{MR3365747},
Lafferriere and Miller~\cite{unifreach}, 
Sokantika and Thamrongthanyalak~\cite{MR3731015}, 
and~\cite{MR3896056}
for some related material on reachability properties of sets definable in certain expansions of $(\R,<)$.

The previous paragraph explains somewhat why we have so far mentioned working only over $(\R,<)$. 
But the question does arise: 
What about expansions of other dense linear orders, or even more generally,  first-order topological structures as defined in Pillay~\cite{fotop}?  
 We can do this to some extent, but the definitions of CC and QCC need to be modified, and it is unclear what do about PCC. 
To illustrate,  as $\Q$ is totally disconnected in the order topology, every structure on $\Q$ is trivially PCC, CC and QCC.
We propose that a reasonable analogue of CC in these more abstract settings is that every definable set be a union of maximal definably connected subsets. 
A priori, this property is weaker than CC for expansions of $(\R,<)$, but it is strong enough to obtain that every expansion of $(\R,<,\N)$ having the property defines $\mid$\, 
(\cf\ Proposition~\ref{definesmidintro}).
We defer further discussion of these more general settings to later in the paper. 
 
\subsubsection*{Some history and attributions}
The original questions underlying this research were raised by Thamrongthanyalak in conversations with Miller; some early results, including some first steps toward 
Theorem~\ref{newmainthm}, were documented in~\cite{ccparis}.
(See also Fornasiero ~\cite{MR2896830}.)
Savatovsky became interested in the project after attending a presentation by Miller at Universit\"at Konstanz in May 2017, subsequently leading to the establishment of Proposition~\ref{definesmidintro} and the motivating case (namely, $\rr=(\R,<,+,\mid\,)$) for Proposition~\ref{main}.
The proof presented here of Proposition~\ref{definesmidintro}
is due to Miller and  Thamrongthanyalak.
The statement and proof of Proposition~\ref{main} presented here is due essentially to Dolich. 
 
\subsubsection*{Acknowledgements} 
Research of Miller was supported in part by a Simons Visiting Professorship,
hosted by Mathematisches Forschungsinstitut Oberwolfach, Universit\"{a}t Konstanz, and Universit\'{e} Savoie Mont Blanc (Bourget-du-Lac).
Research of Thamrongthanyalak was conducted in part while a Zassenhaus Assistant Professor at Ohio State University.
We thank David Marker, Russell Miller, Serge Randrianbololona, Charles Steinhorn 
and Erik Walsberg for helpful observations.

\section{Proofs}
We shall not be providing proofs of results stated in the introduction in the same order as they were stated.

We begin with some conventions and routine facts from topology.
Let $Y$ be a topological space and $X\subseteq Y$.
The closure of $X$ is denoted by $\overline X$.
We say that $X$ is locally connected if, in the subspace topology, $X$ has a  basis consisting of connected sets. 
 If $u,v\in X$, then a path in $X$ from $u$ to $v$ is a continuous 
map $\gamma$ from a compact interval $[a,b]\subseteq \R$ into $Y$ such that $\gamma(a)=u$, $\gamma(b)=v$, and 
$\gamma([a,b])\subseteq X$. 
The set $X$ is path connected if for every $u,v\in X$ there is a path in $X$ from $u$ to $v$, and is locally path connected if, in the subspace topology, $X$ has a  basis consisting of path-connected sets.

\begin{dblank}\label{topfacts}
Here are some facts to keep in mind. 
\begin{itemize}
\item
The component of $y\in Y$ is the union  of all connected sets containing $y$ (and similarly for path components). 
\item
The quasicomponent of $y\in Y$ is the intersection of all clopen sets containing $y$.
\item
If $X$ is a component or a quasicomponent of $Y$, then $X$ is closed. \item
If $Y$ is locally connected, 
then its quasicomponents and its components are the same.
 \item
If $Y$ is locally path connected\tx, then its path components and its components are the same (which in turn are the same as its quasicomponents).
\item
If $Y$ has a countable basis at $y\in Y$ consisting of connected sets and $Y\setminus \{y\}$ is locally path connected, then $Y$ is locally path connected.
\end{itemize}
\end{dblank}

\begin{rem}
There are closed subsets of $\R^2$ having quasicomponents that are not components, \eg, 
$\{-1,1\}\times \R$ is a quasicomponent of the closure of the union of the boundaries of the boxes 
$
(-1+2^{-n}, 1-2^{-n})\times (-2^n,2^n)$, $n\in\N$.
\end{rem}
 
\begin{proof}[Proof of Proposition~\ref{extremes}]
Suppose that $\rr$ is an expansion of $(\R,+,\cdot,\N)$. We show that $\rr$ is PCC, CC and QCC.
Let $n\in\N$. 
Recall (see, \eg, \cite{ominbook}*{page~16}) that there exist $m\in \N$ and $Z\subseteq \R^m\times \R^n$ such that $Z$ is $\emptyset$\nbd definable in $(\R,+,\cdot,\N)$ and
$$
\bigl\{\, \set{y\in \R^n: (a,y)\in Z}: a\in \R^m\,\bigr\}
$$
is exactly the collection of the closed subsets of $\R^n$.
Let $E\subseteq \R^n$.
If $X$ is either a component or a quasicomponent of $E$, then $X$ is closed in $E$, and so 
$(\R,+,\cdot,\N,E)$ defines $X$.
Let $X$ be a path component of $E$; we show that $(\R,+,\cdot,\N,E)$ defines $X$.
It suffices to fix $x\in X$ and show that the set of all $v\in \R^n$ for which there is a path in $E$ from $x$ to $v$ is definable in $(\R,Z,E)$. 
Now, $v\in X$ if and only if there is a continuous $\gamma\colon [0,1]\to \R^n$ such that  $\gamma(0)=x$, $\gamma(1)=v$, and the image
of $[0,1]$ under $\gamma$ is contained in $E$.
To put this another way, 
$v\in X$ if and only if there exists $a\in \R^m$ such that: the fiber $Z_a:=\set{y\in \R^n: (a,y)\in Z}$ is a  bounded subset of $E$; 
for all $t\in [0,1]$ there exists a unique $y\in\R^n$ such that $(t,y)\in [0,1]\times Z_a$; and, the points $(0,x)$ and $(1,v)$ belong to $[0,1]\times Z_a$. (The resulting function is continuous because its graph is closed.)
As the set of all such $v$ is definable in $(\R,+,\cdot,\N,E)$, so is $X$. 

Suppose that $\rr$ is o\nbd minimal. 
 We show that $\rr$ is PCC, CC and QCC.
Every definable set has only finitely many components, each of which is definable~\cite{ominbook}*{Chapter 3, (2.18) and (2.19).7}. 
Hence, $\rr$ is CC. 
For PCC and QCC, it suffices by the basic topological facts to show that every definable set is locally path connected.
By cell decomposition and a routine induction on dimension, it suffices to let $C\subseteq \R^n$ be a cell of $\rr$ and $y\in\overline{C}\setminus C$, and show that  $\{y\}\cup C$ is locally path connected.
It is an exercise to see that  $\{y\}\cup C$ is locally connected at $y$ (indeed, 
see~\cite{ominbook}*{Chapter 3, (2.19).8}), that is, in the subspace tolopogy, $\{y\}\cup C$ has a basis at $y$ consisting of connected sets; 
as we are working over $\R$, we may take this basis to be countable. 
Now,  $(\{y\}\cup C)\setminus \{y\}=C$, and cells are locally path connected. 
By the last item of~\ref{topfacts},   $\{y\}\cup C$ is locally path connected at $y$ (as was to be shown).
\end{proof}
  
 \begin{proof}[Proof of Proposition~\ref{localominequivs}]
 This is mostly just an exercise, so we give only an outline.

(1)$\Rightarrow$(2).
If $\rr$ is locally o\nbd minimal, then it follows from the 
Bolzano-Weierstrass Theorem that 
every bounded definable subset of $\R$ is a finite union of points and open intervals.
Hence, if $E\subseteq \R^n$ is bounded, then there is an open interval $I\subseteq \R$ such that the expansion of $(I,<)$ by all definable (in $\rr$) subsets of $E$ is o\nbd minimal.  
It is now routine to see that the expansion of $(\R,<)$ by all definable
subsets of $E$ is also o\nbd minimal.  
 
(2)$\Rightarrow$(3). Recall the proof of Proposition~\ref{extremes}.

(3)$\Rightarrow$(4) is just topology, and (4)$\Rightarrow$((5) \& (6)) is trivial. 

(5)$\Rightarrow$(7).
If $E\subseteq \R$ has  empty interior and a limit point~$c$, then
$$
(E\setminus\{c\}\times [0,1])\cup \{(c,1)\}\cup (\R\times \{0\})
$$
is connected, but not path connected.
 
(6)$\Rightarrow$(7).
If $E\subseteq \R$ has empty interior and a limit point~$c$, then $\{(c,0),(c,1)\}$ is a quasicomponent of 
its union with $(E\setminus\{c\})\times [0,1]$.

(7)$\Rightarrow$(1).  
The boundary of every definable subset of $\R$ is closed and discrete. 
\end{proof}
 
 \begin{blank}\label{uniflocominpath}
Suppose there is an o\nbd minimal expansion $\rr_0$ of $(\R,<,+)$ such that every bounded set definable in $\rr$ is definable in $\rr_0$.
Let $E\subseteq \R^n$ be connected and definable in $\rr$.
Let $x,y\in E$. 
Then there is a path in $E$ definable in $\rr_0$ joining $x$ to $y$.  
\end{blank}

\begin{proof}
As $\rr$ is locally o\nbd minimal, $E$ is path connected. 
As images of paths are compact, there exist $N\in\N$ and a path in $E\cap [-N,N]^n$ joining $x$ to $y$. 
By assumption, $E\cap [-N,N]^n$ is definable in $\rr_0$. 
By o\nbd minimality, the path components of $E\cap [-N,N]^n$ are definable in $\rr_0$. 
Thus, we may reduce to the case that $E\cap [-N,N]^n$ is path connected. 
By Cell Decomposition and Curve Selection, there is a path in $E\cap [-N,N]^n$ joining $x$ to $y$ that is definable in $\rr_0$.
\end{proof}

For longer-range strategic reasons, we now introduce some greater generality.
Given a first-order topological structure $\mathfrak{M}$ with underlying set $M$, a set $X\subseteq M^n$ is said to be \textbf{definably connected} (with respect to $\mathfrak{M}$) if it is definable and not a union of two disjoint nonempty definable subsets of $X$ that are open in $X$. 
(To be clear, definably connected sets are definable \emph{by definition}.)  
Evidently, every connected definable set is definably connected, but the converse tends to fail in general.
It is not necessarily true that every definable set is a union of maximal definably connected subsets of $X$
(though this is true for all o\nbd minimal structures).  
 
\begin{prop}\label{definesmidQ}
If $\mathfrak{Q} $ is an expansion of $(\Q,<,\N)$ such that every definable set is a union of maximal definably connected sets\tx, then $\mathfrak{Q} $ defines $\mid$\, .
\end{prop}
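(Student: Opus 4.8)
The plan is to run, over $\Q$, a construction parallel to the one behind Proposition~\ref{definesmidintro}, using \emph{definable} connectivity as the sole geometric tool. This is forced on us: since $\Q$ is totally disconnected in the order topology, ordinary connectivity is useless, but definable connectivity has genuine content here---already over $(\Q,<)$ a bounded open interval is definably connected, as no definable set separates it. Note first that $(\Q,<,\N)$ defines the successor function $s$ on $\N$ (the immediate-successor relation is order-definable from $\N$), together with every $(\N,<)$-definable subset of each $\N^k$; what it does \emph{not} define is any genuinely arithmetic relation on $\N$---by an appropriate quantifier elimination, as for $(\R,<,\N)$, not even $\set{2n:n\in\N}$---so all of the arithmetic content of $\mid$ must be extracted from the hypothesis on maximal definably connected subsets.

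Concretely, I would build a single definable set $E\subseteq \Q^k$, with two of its coordinates playing the roles of a divisor $m$ and a running value, designed so that \lqu reachability\rqu\ inside $E$ simulates repeated stepping by $m$: starting from a base point over $m$, the maximal definably connected subset of $E$ through it should meet the value-coordinate exactly in the multiples of $m$. The stepping I would implement by order-definable rectilinear connectors joining consecutive grid points of $\N\times\N$ and routed through the open boxes $(a,s(a))\times(b,s(b))$, which contain no integer points; a mod\nbd$m$ \lqu reset\rqu\ connector (available since $j<m$ and the immediate predecessor of $m$ are order-definable) closes the cycle so that the running value returns to the base fiber precisely when $m\mid n$. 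The crucial general fact I would exploit is that, by definition, every definably connected set is definable; hence each maximal definably connected subset furnished by the hypothesis is itself definable. Arranging the whole construction to be uniform in $m$ then makes the decomposition a definable family, and \lqu $p$ and $q$ lie in a common piece\rqu\ becomes a definable relation, from which $\set{(m,n):m\mid n}$ is read off directly.

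The main obstacle is the construction and its verification, not the extraction. Because $(\Q,<,\N)$ has no addition---indeed not even parity or translation by a fixed constant---the connectors must be laid down using only order comparisons of coordinates, and the delicate point is to prevent \emph{accidental merging}: since the closure of an open unit box meets four integer corners, one must route so that the only integer points in the closure of each connector are the two intended endpoints, so that distinct divisibility classes are never joined, while simultaneously ensuring each intended piece really is definably connected (no definable subset separates it) and maximal. Establishing these two facts---definable connectedness of each intended piece and the absence of any definable connection across classes---is where essentially all the work lies; once they hold, the hypothesis that $E$ is a union of maximal definably connected subsets identifies the pieces as the divisibility fibers. Since the construction uses only $<$, $\N$, boolean operations and projection, never completeness of $\R$ nor any metric structure, the same argument applies verbatim over $\Q$ as over $\R$.
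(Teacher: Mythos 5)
Your overall strategy---order-definable connector sets whose ``reachability'' structure encodes iterated stepping, read off via the hypothesis---is the same as the paper's, and your key general observation is correct: since definably connected sets are definable by definition, and any two definably connected subsets sharing a point have definably connected union, the maximal definably connected piece through a given point equals the union of all definably connected subsets through it and is definable whenever the hypothesis supplies a maximal piece. But your extraction step hides a fatal gap. You assert that making the construction ``uniform in $m$'' makes the decomposition into maximal pieces ``a definable family'' and makes ``$p$ and $q$ lie in a common piece'' a definable relation. The hypothesis gives nothing of the sort: it is applied one definable set and one base point at a time, each application yielding one definable set (the piece through that point), with formula and parameters depending on the point; the relation ``there exists a definably connected subset containing both $p$ and $q$'' quantifies over definable sets and is not first-order. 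Since $\mid$ is intrinsically uniform in the divisor, this is precisely the hard content of the proposition, not a formality to be arranged. Worse, the uniform-in-$m$ set $E$ you want cannot even be written down in $(\Q,<,\N)$: for a fixed $m$, ``step by $m$'' along $\N$ is definable (iterate the successor), but uniformly in $m$ it is essentially the graph of addition on $\N$, which $(\Q,<,\N)$ does not define, by the same quantifier elimination you invoke to show $2\N$ is not definable.

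The paper resolves both problems by a bootstrap that your proposal is missing, and it inverts your assessment of where the work lies. First, for each \emph{fixed} $d$, a nested-boxes-with-shift set $\mathbb{S}_d\subseteq\Q^3$ is built (definable for fixed $d$ exactly because only integer corner data get translated, via $d$-fold successor); one application of the hypothesis at the single point $(d,0,0)$ shows the maximal piece there meets the positive $x$-axis in $\set{(d(n+1),0,0): n\in\N}$, so each $d\N$ is definable---in particular $2\N$. Second, with parity now available, a single connector set $X\subseteq\Q^7$ is built from segments whose routing is chosen by case distinction on parities, and one application of the hypothesis at the origin yields $+\rest\N^2$ as a single definable set. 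Third---and only now---addition makes the family $\bigcup_{d\in\N}(\mathbb{S}_d\times\{d\})$ genuinely definable, and adjoining the line $\ell=\set{(1,0,0,t): t\in\Q}$ ties every level $d$ to the one point $(1,0,0,1)$, so that one final application of the hypothesis at that single point produces a single definable set whose trace on $[0,\infty)\times\{0\}\times\{0\}\times\N$ encodes all of $\mid$ at once. This single-base-point trick, made possible only after addition has been extracted, is what legitimately replaces your ``common piece is a definable relation'' step. By contrast, the connector-verification issues you identify as ``where essentially all the work lies'' (avoiding accidental merging, checking definable connectedness of the intended pieces) are treated in the paper as routine inspection; without the addition bootstrap and the tying line $\ell$, no amount of care in laying connectors will convert pointwise applications of the hypothesis into the uniform relation $\mid$.
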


\begin{proof}
For $d\in\N$, put $d\N:=\set{dn: n\in\N}$.
As  $a\mid b$ if and only if $b\N\subseteq a\N$, it suffices to show that 
$\bigcup_{d\in\N}(d\N\times \{d\})$ is definable.

First we show that each $d\N$ is definable.
Let $\mathbb S_0$ be the intersection of the boundary of $(0,\infty)^3$ with the union of the boundaries of the boxes $(-n,n)^3$, $0<n\in\N$.
Put $P=\set{(x,0,z)\in\Q^3: x,z>0}$.
For $d\in \N$, put
\[
\mathbb{S}_d=\bigl(\mathbb S_0\setminus P\bigr) \cup \bigl((\mathbb S_0\cap P)+(d,0,0)\bigr)
\cup \bigl([0,d]\times \{0\}\times \N^{>0}\bigr).
\]
In words, $\mathbb{S}_d$ is the result of replacing $\mathbb S_0\cap P$ by shifting it $d$ units in the positive $x$ direction, and then filling in the resulting gaps with the line segments $[0,d]\times \{0\}\times \{n\}$, $n>0$.
Each $\mathbb S_d$ is definable in $(\Q,<,\N)$. 
Fix $d$ and let $C_d$ be the union of all definably connected subsets of $\mathbb{S}_d$ that contain the point $(d,0,0)$. 
By assumption, $C_d$ is definable; by inspection, its intersection
with the positive $x$\nbd axis is the set $\set{(d(n+1),0,0):n\in\N}$.
Hence, each $d\N$ is definable. 

We now show that $+\rest \N^2$ (that is, addition on $\N$) is definable. 
We first define sets $\varGamma_{m,n} \subseteq \Q^6$ for $m,n\in\N$ by case distinction on the parities of $m$ and $n$.  
For $m,n\in2\N$, let $\varGamma_{m,n}$ be the union of the line segments in $\Q^6$ connecting, in order, 
the points 
$$
(m,n,0,0,0,0), (m,n,0,1,0,1), (m+1,n,0,1,0,1), (m+1,n+1,0,1,0,1).
$$
Note that $\varGamma_{m,n}$ is the union of the sets 
\begin{gather*}
\set{(m,n,0,t,0,t): 0\leq t\leq 1}\\
\set{(t,n,0,1,0,1): m\leq t\leq m+1}\\
\set{(m+1,t, 0,1,0,1): n\leq t\leq n+1}.
\end{gather*}
For $m,n\in2\N+1$, use the points 
$$
(m,n,0,0,0,0),  (m,n,0,1,0,1),  (m+1,n,0,1,0,1), (m+1,n+1,0,1,0,1). 
$$
For $m\in 2\N$ and $n\in2\N+1$, 
use the points 
$$
(m,n,0,0,0,0), (m,n,0,1,1,0), (m+1,n,0,1,1,0), (m+1,n+1,0,1,1,0).
$$ 
Finally, for $m\in2\N+1$ and $n\in2\N$, use the points
$$
(m,n,0,0,0,0), (m,n,1,0,0,1), (m+1,n,1,0,0,1), (m+1,n+1,1,0,0,1).
$$
Let $X$ be the union of $\N\times \bigcup_{m,n\in\N}\varGamma_{m,n}$  with the sets 
\begin{gather*}
\set{(d,0,d,t,t,t,t): d\in\N\And 0\leq t\leq 1}\\
\set{(t,0,t,1,1,1,1): t\geq 0}.
\end{gather*} 
Note that $X$ is definable in $(\Q,<,\N,2\N)$.
Let $C$ be the union of all definably connected subsets of $X$ that contain the origin (in $\Q^7)$.
By assumption, $C$ is definable. 
It is routine to see that the intersection of $C$ with $\Q^3\times \{0\}^4$ is equal to 
$+\rest \N^2\times \{0\}^4$.
Hence, $+\rest \N^2$ is definable. 

We are ready to finish the proof. 
As $+\rest \N^2$ is definable, shifting to the right by $d$ units is uniformly definable, and so  
$\bigcup_{d\in\N}(\mathbb S_d\times \{d\})$ is definable. 
Put $\ell=\set{(1,0,0,t): t\in \Q}$ and let $C'$ be the union of all definably connected subsets of
$
\ell\cup \bigcup_{d\in\N}( \mathbb S_d\times \{d\})
$
that contain the point $(1,0,0,1)$.
Observe that 
\[
C'\cap \bigl([0,\infty)\times \{0\}\times\{0\}\times \N\bigr)=\bigcup_{d\in \N}\set{(dn+1,0,0,d): d\in\N}. 
\]
Hence, $\bigcup_{d\in\N}(d\N\times \{d\})$ is definable,
as was to be shown.
\end{proof}
 
\begin{proof}[Proof of Proposition~\ref{definesmidintro}] This is a direct corollary of the proof of~\ref{definesmidQ}:
By repeating the proof over $(\R,<)$, we see that the sets $C$ and $C'$ are each components,  quasicomponents and path components of certain definable sets.
\end{proof}

Here is another corollary of the proof  of~\ref{definesmidQ}.

\begin{blank}[\cf~Proposition~\ref{definesmidintro}]
If $\rr$ is an expansion of $(\R,<,\N)$ such that every definable set is a union of maximal definably connected sets\tx, then $\rr$ defines $\mid$\, .
\end{blank}
 
\begin{proof}[Proof of Theorem~\ref{maintech}]
Let $\emptyset \neq A\subseteq \R$ have empty interior and 
$f\colon A\to A$ be strictly increasing and strictly dominating the identity.
Note that $A$ cannot have a maximal element. 
We must find a strictly increasing $\sigma\colon\N\to\R$ such that every expansion of 
$(\R,<)$ that defines $f$ and is PCC\tx, CC or QCC also defines $\set{(\sigma(m),\sigma(n)): m \mid n}$.
There exists $a_0\in A$ such that $A\cap [a_0,\infty)$ is infinite. 
By replacing $A$ with $A\cap [a_0,\infty)$ and $f$ with its restriction to  $A\cap [a_0,\infty)$ we reduce to the case that 
$\min A$ exists; for ease of notation, say $\min A=0$.
We obtain $S\subseteq \R^3$ by repeating the construction of $\mathbb S_1$ 
(as in the proof of~\ref{definesmidQ}), but with $\R$ instead of $\Q$, $A$ instead of $\N$, and $f$ instead of the successor function on $\N$.
Note that $S$ is definable from $f$ over $(\R,<)$. 
Let $P$ be the path component of $S$ that contains $f(0)$. 
Note that $P$ is a component and a quasicomponent of $S$, and the intersection of $P$ with $[0,\infty)\times \{0\}\times \{0\}$ is the orbit of $f$ on $f(0)$.
Thus, it suffices to assume that $A$ is the image of a strictly increasing function $\N\to\R$ and $f$ is the successor function on $A$; 
then $\bigl((-\infty,\sup A),<,A\bigr)$ is isomorphic to $(\R,<,\N)$ and we apply Proposition~\ref{definesmidintro} to finish. 
\end{proof}

\begin{proof}[Proof of Theorem~\ref{dichot}]
Let $\rr$ be an expansion of $(\R,<,+)$ by boolean combinations of open sets.
Suppose that $\rr$ is PCC, CC or QCC, but is not o\nbd minimal. 
We show that there is a strictly increasing $\sigma\colon \N\to\R$ such that $\set{(\sigma(m),\sigma(n)): m \mid n}$ is definable.
It suffices by the proof of Theorem~\ref{maintech} to show that  $\rr$ defines the range of a strictly increasing sequence in $\R$.
By Dougherty and Miller~\cite{DoM} and  
Dolich, Miller and Steinhorn~\cite{dms1},
$\rr$ defines an infinite discrete $A\subseteq \R$. 
 If $A$ is closed, then at least one of $A\cap [0,\infty)$ or $-A\cap [0,\infty)$ is the range of a strictly increasing sequence.  
Suppose now that $A$ is not closed.   
Put $F=\overline{A}\setminus A$; then $F$ is nonempty, has no interior, and is closed (because $A$ is locally closed).  

Suppose that $F$ has no isolated points; then there exists $N\in\N $ such that
$F\cap [-N,N]$ is a Cantor set.
The set of negatives  of lengths of the maximal open intervals of $[-N,N]\setminus F$ is definable and the range of a strictly increasing sequence of real numbers.
 
Suppose that $F$ has an isolated point $c$.
Let $x,y\in\R$ be such that $F\cap (x,y)=\{c\}$.
As $c$ is a limit point of $A$, at least one of $A\cap (x,c)$ or $A\cap (c,y)$ is infinite and discrete;
assume the former (the latter is similar).  
By increasing $x$, we have that $A\cap (x,c)$ is the range of a strictly increasing sequence 
(because $A\cap (x,c)$ has no limit points in $(x,c)$). 
\end{proof}

\begin{rem}
The last paragraph of the proof does not require the group structure.
\end{rem}

We now begin to work toward the proof of Proposition~\ref{main}. 
Our approach will be more model theoretic than our earlier material.
In particular, for the remainder of this section, we use both ``expansion'' and ``reduct'' in the traditional first-order syntactic sense,  ``quantifier-free definable'' means ``defined by a quantifier-free formula of the language under consideration'', and ``QE'' abbreviates ``quantifier elimination''. 

 Let $T_\Q$ be the theory of ordered $\Q$\nbd vector spaces with a distinguished positive element, in the language $L_\Q:=\set{<,+,0,1}\cup \set{\lambda_q: q\in \Q}$, where $0<1$ and $\lambda_q$ is intended to indicate left scalar  multiplication by $q$. 
Let $\grint{\phantom{x}}$ be a new unary function symbol and
put 
$L_\Q'=L_\Q\cup\{\grint{\phantom{x}}\}$.
Let $T_\Q'$ be the union of $T_\Q$ and the universal closures of the following: 
\begin{itemize}
\item
$\grint{\grint{ x}+y}=\grint{ x}+\grint{ y}$
\item
$0\leq x<1\rightarrow \grint{ x}=0$
\item
$\grint{1}=1$
\item
$\grint{ x}\leq x<\grint{ x}+1$
\end{itemize}
(Observe that as $T_\Q$ is universally axiomatizable, so is $T_\Q'$.)
By~\cite{dloivp}*{Appendix},
$T_\Q'$ has QE and is complete.\footnote{Some minor errors in the published proof were noticed and repaired by Trent Ohl while he was a Ph.D.\ student of author Miller.}
It is easy to see that $(\R,<,+,\N)$ is $\emptyset$\nbd interdefinable with 
$$
(\R,<,+,0,1, (q.x)_{q\in\Q},\grint{\phantom{x}})
$$
where 
$\grint{\phantom{x}}$ denotes the usual ``floor'' function $t\mapsto \max(\Z\cap  (-\infty,t])$ for $t\in \R$.
Thus, 
$\Th(\R,<,+,\N)$ is bi-interpretable with $T_\Q'$. 
(Of course, $\Z$ is $\emptyset$\nbd interdefinable with $\N$ over $(\R,<,+)$. It is more convenient to prove Proposition~\ref{main} by working over  $(\R,<,+,\Z)$ instead of $(\R,<,+,\N)$.)
 
Let $\mathfrak Z$ be a structure on $\Z$ in a language $L_\mathfrak{Z}$  such that: 
\begin{itemize}
\item
$L_\mathfrak{Z}$ has no function or constant symbols; 
\item
there is a symbol $Z\in L_{\mathfrak Z}$ that is interpreted as $\Z$ in  $\mathfrak Z$; 
\item
$\Th(\mathfrak Z)$ has QE; 
\item
$\mathfrak Z$ defines $\mid$ (hence also all arithmetic sets).
\end{itemize} 
Put $L=L_\Q'\cup L_\mathfrak Z$ and 
$$
T=T_\Q'\cup \{\forall x\,(Zx \leftrightarrow \grint{x}=x)\}\cup \set{\sigma^Z: \sigma\in\Th(\mathfrak Z)},
$$
where $\sigma^Z$ denotes the relativization of $\sigma$ to $Z$ (see, \eg, Chang and Keisler~\cite{CK} for a definition).  
We will  show that $T$ has QE.
The proof is an extension of that of the corresponding result for $T_\Q'$, but the added complication justifies outlining some details. 
 We begin by disposing of a number of routine technical observations and lemmas (the proofs of which we leave mostly to the reader). 
Unless indicated otherwise, ``term'' means ``$L$\nbd  term''.

Every term is a finite composition of $\grint{\phantom{x}}$ and $L_\Q$\nbd terms.
Every finite composition of  $\lfloor\phantom{x}\rfloor$ is $T$\nbd equivalent to $\grint{\phantom{x}}$.
Every $L_\Q$\nbd term in variables $x_1,\dots,x_n$ is $T_\Q$\nbd equivalent to one of the form 
$\sum_{i=1}^n\lambda_{q_i}x_i+\lambda_{q_0}1$ for some $n\in\N$ and $q_1,\dots,q_n\in\Q$; 
we tend to write 
$
\sum_{i=1}^nq_ix_i+q_0
$
instead of 
$
\sum_{i=1}^n\lambda_{q_i}x_i+\lambda_{q_0}1
$.
If $\tau_1$ and $\tau_2$ are  $L_\Q$\nbd terms in variables $x_1,\dots,x_n$, then there is an $L_\Q$\nbd term $\sigma$ in the same variables 
such that  
\begin{gather*}
T_\Q\vdash \tau_1x_1\dots x_n=\tau_2x_1\dots x_n
\leftrightarrow \sigma x_1\dots x_n=0\\
T_\Q\vdash \tau_1x_1\dots x_n<\tau_2x_1\dots x_n
\leftrightarrow \sigma x_1\dots x_n<0.
\end{gather*}
 
In what follows, we tend to let  $\mathfrak B$ (allowing subscripts) denote an arbitrary model of $T$, with underlying set $B$ (allowing corresponding subscripts).
Let $Z(\mathfrak B)$ denote the interpretation of the symbol $Z$ in $\mathfrak B$. 
Note that $Z(\mathfrak B)=\set{b\in B: \grint{b}=b}$.
We recall a lemma used in the proof of QE of $T_\Q'$, restated for our current purposes, followed by some routine consequences. 

\begin{blank}\label{modlemma}
For all $b\in Z(\mathfrak{B})$ and positive integers $m$ there exists a unique $i\in\{0,\dots, m-1\}$ 
such that  
$
\frac1m(b+i) \in Z(\mathfrak{B})
$.
\end{blank} 

\begin{blank}\label{terms}
Let $n\geq 1$ and $\tau(x_1,\dots,x_n)$ be a term.
\begin{enumerate}
\item
There is a finite partition $\mathcal C$ of $Z(\mathfrak B)$ into sets $\emptyset$\nbd definable in the reduct of $\mathfrak B$ to $L_\mathfrak{Z}$ such that, for each $C\in \mathcal C$\tx, 
there exist $q\in\Q$ and a term $\sigma(x_1,\dots,x_{n-1})$ with 
$\tau=\sigma+qx_n$ on $B^{n-1}\times C$.
\tx(More precisely\tx: The restriction to $B^{n-1}\times C$ of the interpretation of $\tau$ in $\mathfrak B$  is equal to the restriction to $B^{n-1}\times C$
of the interpretation of $\sigma+qx_n$ in $\mathfrak B$.\tx)
\item
There is a finite partition $\mathcal C$ of $Z(\mathfrak B)$ into sets $\emptyset$\nbd definable in the reduct of $\mathfrak B$ to $L_{\mathfrak Z}$ such that, for each $C\in \mathcal C$\tx, 
there exist $q,r\in\Q$ and a term $\sigma(x_1,\dots,x_{n-1})$ such that
for all $(b,c)\in B^{n-1}\times C$\tx, if $\tau(b,c)\in Z(\mathfrak B)$\tx, then $\tau(b,c)=\sigma(b)+qc+r$ and both 
$\sigma(b)$ and $qc+r$ lie in $Z(\mathfrak B)$.
\item
Let $m\leq n$\tx, $b\in B^m$ and $D$ be the definable closure of $b$ 
in the reduct of $\mathfrak B$ to $L_\Q'$. 
There is a finite partition $\mathcal C$ of $[0,1)^{n-m}$ into sets that are $D$\nbd definable in 
the reduct of 
$\mathfrak B$ to $L_\Q$ such that\tx, 
for each $C\in\mathcal C$\tx, 
there exist $q_1,\dots,q_{n-m}\in\Q$ and $d,e\in D$ with
$\tau\rest (\{b\}\times C)=\sum_{i=m+1}^{n}q_ix_i+d$ and $\grint{\tau}\rest (\{b\}\times C)=e$.
\end{enumerate}  
\end{blank}

We omit proofs. 
(Generally, proceed via~\ref{modlemma} and induction on terms, noting that the interesting cases tend to be when $\tau$ is of the form $\grint{\phi}$.) 
 
\begin{blank}\label{zodfbl}
Every subset of $Z(\mathfrak B)^n$ that is quantifier-free definable in $(\mathfrak B, (b)_{b\in B})$ is definable in $(Z(\mathfrak B), (R(\mathfrak B))_{R\in L_\mathfrak Z})$\tx, where $R(\mathfrak B)$ is the interpretation of $R$ in $\mathfrak B$.
\end{blank} 

\begin{proof}[Sketch of proof]
As every arithmetic set is $\emptyset$\nbd definable in $\mathfrak Z$, there is a bijection from 
$Z(\mathfrak B)^n$ to $Z(\mathfrak B)$ that is quantifier-free definable in $\mathfrak B$. 
Thus, it suffices to consider subsets of $Z(\mathfrak B)$ that are atomically defined in $(\mathfrak B, (b)_{b\in B})$. 
Functions given by unary terms of  $(\mathfrak B, (b)_{b\in B})$
are given by compositions of $\grint{\phantom{x}}$ and functions of the form $qx+b$ with $b\in B$.
Employ~\ref{terms}.1 and~\ref{terms}.2 as needed.
\end{proof}

\begin{blank}\label{locdefine}
Let $Y\subseteq B^{m+n}$ be quantifier-free definable in $\mathfrak B$ and $(u,z)\in B^m\times Z(\mathfrak B)^n$.
Let $D$ be the definable closure of $(u,z)$ in the reduct of $\mathfrak B$ to $L_\Q'$.
Then  
$$
\set{x\in B^n: (u,x)\in Y}\cap \prod_{i=1}^n[z_i,z_i+1)
$$
is $D$\nbd definable in the reduct of $\mathfrak B$ to $L_\Q$.
\end{blank}

\begin{proof}
[Sketch of proof]
It suffices to show the result for atomically defined  sets. Employ~\ref{terms}.3.
\end{proof}

\begin{blank}\label{QE}
$T$ is complete and has QE. 
\end{blank}

\begin{proof}
As the $L$\nbd structure $
(\Q,<,+,0,1,(\lambda_q)_{q\in\Q}, \lfloor\phantom{x}\rfloor,\mathfrak Z)
$
embeds into every model of $T$, it is enough to show that $T$ has QE.
We employ one of the standard ``embedding tests''. 
Let $\mathfrak{B}_1,\mathfrak{B}_2\vDash T$ and $\mathfrak{A}$  (with underlying set $A$) be a substructure of $\mathfrak{B}_1$ that is also embedded into $\mathfrak{B}_2$.
Assume that $\mathfrak{B}_2$ is $\operatorname{card}(A)^+$\nbd saturated.
We show that the embedding extends to an embedding of $\mathfrak{B}_1$ into $\mathfrak{B}_2$.

We first reduce to the case that $Z(\mathfrak B_1)=Z(\mathfrak A)$ as follows.
Suppose there exists $c_1\in Z(\mathfrak B_1)\setminus Z(\mathfrak A)$; we show that we may replace 
$\mathfrak A$ with the substructure of $\mathfrak B_1$ generated over $\mathfrak A_1$ by $c_1$.  
(Then iterate as needed.) 
In order to reduce clerical clutter, let us assume that the embedding of $\mathfrak A$ 
into $\mathfrak B_2$ is just the identity on $A$.   
As $\Th(\mathfrak Z)$ has QE, there exists $c_2\in B_2$ such that the $L_\mathfrak Z$\nbd type of $c_2$ over $A\cap Z(\mathfrak{B}_2)$ is equal to the  $L_\mathfrak Z$\nbd type of $c_1$ over $A\cap Z(\mathfrak{B}_1)$.
It suffices now to show that the quantifier-free $L$\nbd type of $c_2$ over $A$ is equal to the quantifier-free $L$\nbd type of $c_1$ over $A$. 
We illustrate by dealing with a special case. 
Let: $m$ and $n$ be positive integers;
$\tau$ be an $n$\nbd ary term; $a\in A^{n-1}$; 
$R$ be an $m$\nbd ary relation symbol of $L_\mathfrak Z$; 
and
$a'\in A^{m-1}$.
If follows from~\ref{terms}.2 applied to each of $\mathfrak B_1$ and $\mathfrak B_2$ that if $\mathfrak B_1\vDash R(\tau(a,c_1),a')$, then $\mathfrak B_2\vDash R(\tau(a,c_2),a')$.
(Other cases are handled similarly using ~\ref{terms}.1 or~\ref{terms}.2.) 
 
Now assume that $Z(\mathfrak B_1)=Z(\mathfrak A)$ and let $c_1\in B\setminus A$. 
As $T_\Q'$ has QE, there exists $c_2\in B_2$ such that the $L_\Q'$\nbd type of $c_2$ over the image of $A$ (under the embedding) is equal to the  $L_\Q'$\nbd type of $c_1$ over $A$.
The rest of the proof is similar to (but easier than) the previous case. 
We omit details. 
 \end{proof}
 
\begin{proof}[Proof of Proposition~\ref{main}]
Let $\rr$ be an expansion of $(\R,<,+,\mid\,)$ by subsets of various $\N^n$. 
By passing to an extension by definitions, we assume that 
$$
\rr=(\R,<,+,0,1,(qx)_{q\in \Q},\grint{\phantom{x}},\mathfrak Z)
$$
where 
$\mathfrak Z$ is the expansion of $\Z$ by all subsets of each $\Z^n$ that are $\emptyset$\nbd definable in $\rr$.
(\emph{Remark:} By~\ref{zodfbl}, every definable subset of any $\Z^n$ is $\emptyset$\nbd definable, and so $\mathfrak Z$ is equal to the structure induced on $\Z$ in $\rr$.) 
By~\ref{QE}, $\Th(\rr)$ has QE and is axiomatized by $T$. 
By QE and~\ref{locdefine}, every bounded set definable in $\rr$ is definable in $(\R,<,+)$, \ie, 
Proposition~\ref{main}.1 holds;
then Proposition~\ref{main}.2 is immediate via~\ref{uniflocominpath}.
It remains to show that $\rr$ is CC. 

Let $E\subseteq \R^n$ be definable in $\rr$.
We must show that each component of $E$ is definable.
Let $m\in\N$, $c\in \R^m$ and $X\subseteq \R^{m+n}$ be $\emptyset$\nbd definable in $\rr$ such that $E=\set{e\in \R^n: (c,e)\in X}$.
Via QE, \ref{zodfbl}, \ref{locdefine} and model-theoretic compactness, there exist
\begin{itemize}
\item
$N\in\N$
\item 
sets $Y_1,\dots,Y_N\subseteq\R^{m+n}$ that are $\emptyset$\nbd definable in $(\R,<,+,1)$
 \item
maps $F_1,\dots,F_N\colon \R^{m+n}\to \R^m$ that are $\emptyset$\nbd definable in  $\rr$ 
\end{itemize}
such that, for each $z\in\Z^n$,
there exists $J\subseteq \{1,\dots,N\}$ such that the sets 
$$
\set{x\in\R^n: (F_j(c,z),x)\in Y_j},\quad j\in J
$$
partition 
$
E\cap \prod_{k=1}^n[z_k, z_k+1)
$
into cells of $(\R,<,+)$.
The set of all $z\in \Z^n$ such that  $\prod_{k=1}^n[z_k, z_k+1)$ intersects $E$ is definable in $\rr$, and  the possible descriptions 
of the cells in  $E\cap \prod_{k=1}^n[z_k, z_k+1)$ can be coded uniformly in $z$ by subsets of $\{0,1\}^d$ for some $d\in\N$. 
Thus, there exist $m\in\N$ and definable $S\subseteq \Z^m\times \R^n$ such that, letting $A$ be the projection of $S$ on the first $m$ coordinates, 
the fibers $S_a$ ($=\set{e\in\R^n: (a,e)\in S}$) of $S$ over $A$ 
partition $E$ into connected sets that are each definable in $(\R,<,+)$.
As any expansion of $(\Z,<,\{0\})$ has Definable Choice, we reduce to the case that 
$a\mapsto S_a$ is injective. 
Let $G$ be the set of all pairs $(a,b)\in A^2$ such that 
$
(S_a\cap \overline{S_b})\cup (S_b\cap \overline{S_a})\neq \emptyset
$;
then $G$ is definable in $\rr$, hence also in $\mathfrak Z$.
As  $\mathfrak Z$ defines all arithmetic sets, each graph component of $(A,G)$ is definable in  $\mathfrak Z$.
(To put this another way: Since $G$ is computable from $A$, each graph component of $(A,G)$ is computably enumerable from $A$, hence definable in $(\Z,+,\cdot, A)$.) 
It is an exercise (\cf~\cite{ominbook}*{Chapter~3, (2.19).5}) to see that $C$ is a graph component of  
$(A,G)$ if and only if $\bigcup_{a\in C}S_a$ is a component of $E$. 
\end{proof}
   
\section{Concluding remarks}

Examination of the proof of Proposition~\ref{main} yields a number of other results; below are three illustrative examples.

\begin{blank}
If $\rr$ is an expansion of  $(\R,<,+,\Z)$ by subsets of various $\Z^n$\tx, then $\Th(\rr)$ is axiomatized 
over the union of $\Th(\R,<,+,1)$ and the relativization to $\Z$ of the theory of structure induced on $\Z$ in $\rr$ 
by expressing that $(\Z,+,1)$ is a substructure of  $(\R,+,1)$ and for every $r\in \R$ there is a unique  
$k\in\Z$ such that $k\leq r<k+1$. 
 \end{blank}

\begin{blank}
If $\rr_0$ is a reduct in the sense of definability of $(\R,<,+)$ over $(\R,<)$\tx, and $\rr$ is an expansion of $(\rr_0,\N)$ by subsets of various $\N^n$\tx, then $\rr$ is CC if and only if it defines~$\mid$\,.  
\end{blank}

(For the QE, replace $\rr_0$ with the expansion in the syntactic sense of $(\R,<,0,1)$ by all $\{0,1\}$\nbd definable functions of $\rr_0$. Every such function is $\emptyset$\nbd definable in $(\R,<,+,1)$.) 

\begin{blank}\label{divgroup}
Let $G$ be a divisible subgroup of $\R$ containing $\Z$ and 
$\mathfrak G$ be an expansion of $(G,<,+,\Z)$ by subsets of various $\Z^n$.
Then $\mathfrak G$ defines $\mid$\, if and only if 
for all $E\subseteq G^n$ definable in $\mathfrak G$ and $x\in E$\tx, the set of all $y\in E$ for which there is a definable-in-$(G,<,+)$ continuous map $\gamma\colon [a,b]\to E$ 
such that 
$\gamma(a)=x$ and $\gamma(b)=y$ is definable in $\mathfrak G$.
\end{blank}

(The appropriate analogue of~\ref{uniflocominpath} follows from~\cite{ominbook}*{p.~100}.) 

\begin{dblank}
Independently of~\cite{dloivp}, an effective proof of QE for 
$\Th(\R,<,+,0,1,(qx)_{q\in \Q},\grint{\phantom{x}})$ 
was produced by Weispfenning~\cite{MR1802076}, though in a slightly different language that did not yield a universal axiomatization.  
Presumably, there is some similar result for $\Th(\R,<,+,\mid\,)$, but relative to an oracle for $\Th(\N,<,\mid\,)$.
We leave further consideration of this matter to the interested reader. 
\end{dblank}

\begin{dblank}
Though currently we are interested primarily in expansions of $(\R,<,+)$ by topological submanifolds, we mention here some potential candidates for PCC structures that are not locally o\nbd minimal but have reasonable model theories.
We suspect that $(\R,<,\Q)$ and  $(\R,<,\Q,-)$ might be PCC, CC and QCC.
(But in any case, $(\R,<,x+1,\Q)$ is neither PCC, CC nor QCC, as it is routine to see that the theory is decidable. Recall Theorem~\ref{maintech}.) 
Let $K$ be the Cantor set.
Is $(\R,<,K)$ PCC, CC or QCC?  (The proof of \cite{dms1}*{7.1} might be useful.) 
Let $E\subseteq \R^2$ be the union of the boundaries of the sets $[-r,r]\times [-r,r]$, $r\in K$.  
We suspect that $(\R,<,E)$ is not CC but every definable set is a union of maximal definably connected sets. 
\end{dblank}
 
\begin{dblank}
If desired, one could say that  a first-order topological structure $\mathfrak{M}$ is ``definably CC'' (or something similar) if each definable set is a union of maximal definably connected subsets, equivalently, 
for all definable $X$ and $x\in X$ there is a maximal definably connected subset of $X$ that contains $x$.
Similarly, one could say that $\mathfrak{M}$ is definably QCC if for all definable $X$ and $x\in X$, the intersection of all clopen definable subsets of $X$ that contain $x$ is definable.
(Every o\nbd minimal structure would then be definably CC and definably QCC, because every definable set would be locally definably connected and a finite union of maximal definably connected subsets.) 
Evidently, a useful analogue of PCC would require more thought, because the definition of ``path connected'' is tied to the real line (but recall~\ref{divgroup} above). 
Note also that by QE of $\Th(\R,<)$, there is no path definable in $(\R,<)$ joining any point in $(-\infty,1)\times (-\infty,2)$ to the point $(1,2)$. 
Thus, in $(\R,<)$, the set $(-\infty,1)\times (-\infty,2)\cup\{(1,2\}$ is path connected and definably connected,  but not ``definably path connected''.  
\end{dblank}

\begin{bibdiv}
\begin{biblist}

\bib{MR3365747}{article}{
   author={Aschenbrenner, Matthias},
   author={Thamrongthanyalak, Athipat},
   title={Michael's selection theorem in a semilinear context},
   journal={Adv. Geom.},
   volume={15},
   date={2015},
   number={3},
   pages={293--313},
   issn={1615-715X},
   review={\MR{3365747}},
}

\bib{CK}{book}{
   author={Chang, C. C.},
   author={Keisler, H. J.},
   title={Model theory},
   series={Studies in Logic and the Foundations of Mathematics},
   volume={73},
   edition={3},
   publisher={North-Holland Publishing Co., Amsterdam},
   date={1990},
   pages={xvi+650},
   isbn={0-444-88054-2},
   review={\MR{1059055}},
}

\bib{dms1}{article}{
   author={Dolich, Alfred},
   author={Miller, Chris},
   author={Steinhorn, Charles},
   title={Structures having o-minimal open core},
   journal={Trans. Amer. Math. Soc.},
   volume={362},
   date={2010},
   number={3},
   pages={1371--1411},
   issn={0002-9947},
   review={\MR{2563733}},
}

\bib{DoM}{article}{
   author={Dougherty, Randall},
   author={Miller, Chris},
   title={Definable Boolean combinations of open sets are Boolean
   combinations of open definable sets},
   journal={Illinois J. Math.},
   volume={45},
   date={2001},
   number={4},
   pages={1347--1350},
   issn={0019-2082},
   review={\MR{1895461}},
}

\bib{ominbook}{book}{
   author={van den Dries, Lou},
   title={Tame topology and o-minimal structures},
   series={London Mathematical Society Lecture Note Series},
   volume={248},
   publisher={Cambridge University Press, Cambridge},
   date={1998},
   pages={x+180},
   isbn={0-521-59838-9},
   review={\MR{1633348}},
}

\bib{geocat}{article}{
   author={van den Dries, Lou},
   author={Miller, Chris},
   title={Geometric categories and o-minimal structures},
   journal={Duke Math. J.},
   volume={84},
   date={1996},
   number={2},
   pages={497--540},
   issn={0012-7094},
   review={\MR{1404337}},
}

\bib{MR2896830}{article}{
   author={Fornasiero, Antongiulio},
   title={Definably connected nonconnected sets},
   journal={MLQ Math. Log. Q.},
   volume={58},
   date={2012},
   number={1-2},
   pages={125--126},
   issn={0942-5616},
   review={\MR{2896830}},
}

\bib{ominsparse}{article}{
   author={Friedman, Harvey},
   author={Miller, Chris},
   title={Expansions of o-minimal structures by sparse sets},
   journal={Fund. Math.},
   volume={167},
   date={2001},
   number={1},
   pages={55--64},
   issn={0016-2736},
   review={\MR{1816817}},
}

\bib{fast}{article}{
   author={Friedman, Harvey},
   author={Miller, Chris},
   title={Expansions of o-minimal structures by fast sequences},
   journal={J. Symbolic Logic},
   volume={70},
   date={2005},
   number={2},
   pages={410--418},
   issn={0022-4812},
   review={\MR{2140038}},
}

\bib{unifreach}{article}{
author={Gerardo Lafferriere},
    author={Miller, Chris},
   title={Uniform reachability algorithms},
   book={
      series={ Lect. Notes Comput. Sci.},
      volume={1790},
      publisher={Springer},
   },
   date={2000},
   pages={215--228},
   review={\ Zbl 0951.93004},
   }
   
   \bib{dloivp}{article}{
   author={Miller, Chris},
   title={Expansions of dense linear orders with the intermediate value
   property},
   journal={J. Symbolic Logic},
   volume={66},
   date={2001},
   number={4},
   pages={1783--1790},
   issn={0022-4812},
   review={\MR{1877021}},
}

\bib{tameness}{article}{
    author={Miller, Chris},
   title={Tameness in expansions of the real field},
   conference={
      title={Logic Colloquium '01},
   },
   book={
      series={Lect. Notes Log.},
      volume={20},
      publisher={Assoc. Symbol. Logic, Urbana, IL},
   },
   date={2005},
   pages={281--316},
   review={\MR{2143901}},
   }
   
 \bib{linear}{article}{
   author={Miller, Chris},
   title={Expansions of o-minimal structures on the real field by
   trajectories of linear vector fields},
   journal={Proc. Amer. Math. Soc.},
   volume={139},
   date={2011},
   number={1},
   pages={319--330},
   issn={0002-9939},
   review={\MR{2729094}},
}
 
\bib{cpdmin}{article}{
   author={Miller, Chris},
   author={Thamrongthanyalak, Athipat},
   title={D-minimal expansions of the real field have the zero set property},
   journal={Proc. Amer. Math. Soc.},
   volume={146},
   date={2018},
   number={12},
   pages={5169--5179},
   issn={0002-9939},
   review={\MR{3866856}},
}

\bib{ccparis}{article}{
   author={Miller, Chris},
   author={Thamrongthanyalak, Athipat},
   title={Component-closed expansions of the real line
(preliminary report)},
   journal={Structures alg\'ebriques ordonn\'ees},
   volume={92},
   date={2018},
   note={\'Equipe de Logique Math\'ematique, Universit\'e Paris Diderot}
}

\bib{itseq}{article}{
   author={Miller, Chris},
   author={Tyne, James},
   title={Expansions of o-minimal structures by iteration sequences},
   journal={Notre Dame J. Formal Logic},
   volume={47},
   date={2006},
   number={1},
   pages={93--99},
   issn={0029-4527},
   review={\MR{2211185}},
}

\bib{munkres}{book}{
   author={Munkres, James R.},
   title={Topology: a first course},
   publisher={Prentice-Hall, Inc., Englewood Cliffs, N.J.},
   date={1975},
   pages={xvi+413},
   review={\MR{0464128}},
}

\bib{fotop}{article}{
   author={Pillay, Anand},
   title={First order topological structures and theories},
   journal={J. Symbolic Logic},
   volume={52},
   date={1987},
   number={3},
   pages={763--778},
   issn={0022-4812},
   review={\MR{902989}},
}

\bib{MR0031446}{article}{
   author={Robinson, Julia},
   title={Definability and decision problems in arithmetic},
   journal={J. Symbolic Logic},
   volume={14},
   date={1949},
   pages={98--114},
   issn={0022-4812},
   review={\MR{0031446}},
}

\bib{MR3731015}{article}{
   author={Sokantika, Saronsad},
   author={Thamrongthanyalak, Athipat},
   title={Definable continuous selections of set-valued maps in o-minimal
   expansions of the real field},
   journal={Bull. Pol. Acad. Sci. Math.},
   volume={65},
   date={2017},
   number={2},
   pages={97--105},
   issn={0239-7269},
   review={\MR{3731015}},
}

\bib{MR3800759}{article}{
   author={Thamrongthanyalak, Athipat},
   title={Dimensional coincidence does not imply measure-theoretic tameness},
   journal={Fund. Math.},
   volume={242},
   date={2018},
   number={1},
   pages={103--107},
   issn={0016-2736},
   review={\MR{3800759}},
}

\bib{MR3896056}{article}{
   author={Thamrongthanyalak, Athipat},
   title={Michael's selection theorem in d-minimal expansions of the real
   field},
   journal={Proc. Amer. Math. Soc.},
   volume={147},
   date={2019},
   number={3},
   pages={1059--1071},
   issn={0002-9939},
   review={\MR{3896056}},
}

\bib{tychthesis}{book}{
   author={Tychonievich, Michael A.},
   title={Tameness results for expansions of the real field by groups},
   note={Thesis (Ph.D.)--The Ohio State University},
   publisher={ProQuest LLC, Ann Arbor, MI},
   date={2013},
   pages={65},
   isbn={978-1321-51344-8},
   review={\MR{3322030}},
}
 
\bib{MR1802076}{article}{
   author={Weispfenning, Volker},
   title={Mixed real-integer linear quantifier elimination},
   conference={
      title={Proceedings of the 1999 International Symposium on Symbolic and
      Algebraic Computation (Vancouver, BC)},
   },
   book={
      publisher={ACM, New York},
   },
   date={1999},
   pages={129--136},
   review={\MR{1802076}},
}
		
\end{biblist}
\end{bibdiv}

\end{document}